\providecommand{\abs}[1]{\left|#1\right|}
\providecommand{\norm}[1]{\left \| #1\right \|}
\newcommand{\I}{\mathcal{I}}
\newcommand{\JJ}{\mathcal{J}}
\newcommand{\K}{\mathcal{K}}
\newcommand{\R}{\mathbb{R}}
\newcommand{\N}{\mathbb{N}}
\numberwithin{equation}{section}
\theoremstyle{plain}
\newtheorem{theorem}{Theorem}[section]
\theoremstyle{plain}
\newtheorem{definition}[theorem]{Definition}
\theoremstyle{plain}
\newtheorem{lemma}[theorem]{Lemma}
\theoremstyle{plain}
\newtheorem{corollary}[theorem]{Corollary}
\theoremstyle{plain}
\newtheorem{proposition}[theorem]{Proposition}
\theoremstyle{remark}
\newtheorem{remark}[theorem]{Remark}
\theoremstyle{example}
\title[]{A family of nonlocal degenerate operators: maximum principles and related properties}
\date{}
\author{Delia Schiera}
\address{Departamento de Matemática do Instituto Superior Técnico, 
Universidade de Lisboa, 
Av. Rovisco Pais, 
1049-001 Lisboa, Portugal.}
\email{delia.schiera@tecnico.ulisboa.pt}
\subjclass[2010]{35J60, 35J70, 35R11, 47G10, 35B51, 35D40. }
\keywords{Maximum and comparison principles; Fully nonlinear degenerate elliptic PDE; Nonlocal operators; Eigenvalue problem.}
\begin{document}
\maketitle

\begin{abstract}
We consider a class of fully nonlinear nonlocal degenerate elliptic operators which are modeled on the fractional Laplacian and converge to the truncated Laplacians. We investigate the validity of (strong) maximum and minimum principles, and their relation with suitably defined principal eigenvalues. 
We also show a Hopf type Lemma, the existence of solutions for the corresponding Dirichlet problem, and representation formulas in some particular cases.
\end{abstract}

\section{Introduction}
Fix $N \in \mathbb{N}$, let $\Omega \subset \R^N$ be a bounded domain, and $u \in C^2(\Omega)$. 
In the last years, there has been an increasing interest in the so-called truncated Laplacian 
\begin{align} \nonumber \mathcal{P}_k^+(D^2 u)(x) &:= \sum_{i=N-k+1}^N \lambda_i(D^2u(x)) \\
&= \label{truncated} \max \left \{ \sum_{i=1}^k \langle D^2 u (x) \xi_i, \xi_i \rangle: \, \{ \xi_i\}_{i=1}^k \in \mathcal{V}_k \right\} \end{align}
where $1 \le k \le N$, $x \in \Omega$, $\lambda_i(D^2u(x)) $ are the eigenvalues of the Hessian of $u$ in the point $x$ in non-decreasing order, and 
$\mathcal{V}_k$  denotes the family of $k$-dimensional orthonormal sets on $\R^N$. Similarly one defines $\mathcal{P}_k^-$ as the sum of the first $k$ eigenvalues, or equivalently substituting the maximum in \eqref{truncated} with a minimum. These operators are nonlinear and degenerate elliptic, in the sense that, given $X$, $Y$ two $N \times N$ real symmetric matrices, 
\[ \langle X \xi, \xi \rangle \le \langle Y \xi, \xi \rangle \text{ for any } \xi \in \R^N \quad \Rightarrow \quad \mathcal{P}_k^\pm(X) \le \mathcal{P}_k^\pm(Y). \]
We refer the reader to \cite{CaffarelliLiNirenberg, HarveyLawson, BGI, BGI1} for some insights on the truncated Laplacians, and further references. 

In \cite{BGT} a new class of nonlocal degenerate elliptic fully nonlinear operators was introduced, which provides a suitable nonlocal analog to the truncated Laplacian. Precisely, we set for $u \in L^\infty(\R^N) \cap C^2(\Omega)$, $\xi \in \R^N$ and $s \in (0, 1)$ 
\[  \mathcal{I}_\xi u(x) :=C_s \int_{0}^{+\infty} \frac{u(x+\tau \xi) -u(x)}{\tau^{1+2s}} \, d\tau, \]
where $C_s$ is a suitable normalizing constant. 
These operators act like a $2s$ derivative in the direction $\xi$, and they are built upon the definition of fractional Laplacian
\[ -(-\Delta)^s u(x):= \frac 12 C_{N, s} \int_{\R^N} \frac{u(x+y) + u(x-y)-2u(x)}{|y|^{N+2s}} \, dy, \]
for which we refer for instance to \cite{BV}. 
Then, one substitutes the second order derivatives $\langle D^2 u (x) \xi_i, \xi_i \rangle$ appearing in \eqref{truncated} with $\I_{\xi_i}$, to get the following definitions
\[ \I_k^+ u(x) := \sup \left \{ \sum_{i=1}^k \mathcal{I}_{\xi_i} u(x) \colon \{ \xi_i \}_{i=1}^k \in \mathcal{V}_k \right \} \]
\[ \I_k^- u(x) := \inf \left \{ \sum_{i=1}^k \mathcal{I}_{\xi_i} u(x) \colon \{ \xi_i \}_{i=1}^k \in \mathcal{V}_k \right \}. \]
We recall that in the particular case $k=1$, they were also considered in the paper \cite{DelPezzoQuaasRossi}. 
As a first remark, we notice that the supremum (infimum) in the definition above in general is not attained, see \cite{BGS}, whereas it is in the case of the truncated Laplacian.
This is related to another peculiar property of $\I_k^\pm$, for which we also refer to \cite{BGS}, namely the fact that the map $x \mapsto \I_k^\pm u(x)$ is not continuous (even for $u \in C^\infty(\Omega)$), differently from what happens for the local case, and also for the integro-differential operators which were taken into account in \cite{CaffarelliSilvestre}. 

Also, if the normalizing constant $C_s$ is chosen in a suitable way, $\I_k^\pm$ converge to the truncated Laplacian as $s \to 1$, see \cite{BGT}. 
Moreover, we stress that even if $k=N$ these operators do not coincide with the fractional Laplacian, and furthermore $\I_N^+ \ne \I_N^-$. 
In \cite{BGT, BGS} many properties of $\I_k^\pm$ have been proved, in particular, representation formulas, Liouville type theorems, comparison and maximum principles, existence of a principal eigenfunction in the case $k=1$, and some properties of principal eigenvalues. 

Now, a natural question is whether it is possible to consider a class of operators with nonlocal diffusion in a $n$-dimensional space, with $1 \le n \le N$. Having this in mind, following \cite{BGT}, define
\[ \mathcal{J}_V u(x):= C_{n, s} P.V. \int_{V} \frac{ u (x+z) - u(x)}{\abs{z}^{n+2s}} \, d \mathcal{H}^n(z)\]
where $P.V.$ stands for principal value, $V$ is a $n$-dimensional subspace of $\R^N$,  $\mathcal{H}^n$ is the $n$-dimensional Hausdorff measure in $\R^N$, and $C_{n, s}$ is a normalizing constant. If $V=\langle \xi_1, \dots, \xi_n \rangle$ then
\[ \mathcal{J}_V u(x)= C_{n, s} P.V. \int_{\R^n} \frac{ u (x+\sum_{i=1}^n \tau_i \xi_i) - u(x)}{(\sum_{i=1}^n \tau_i^2)^{\frac{n+2s}{2}}} \, d\tau_1 \dots d\tau_n. \]
One can now define
\[ \JJ_k^+ u(x):= \sup \left\{ \JJ_V u(x) \colon V= \langle \xi_1, \dots, \xi_k \rangle, \{ \xi_i \}_{i=1}^k \in \mathcal{V}_k \right\}\]
\[ \JJ_k^- u(x):= \inf \left\{ \JJ_V u(x) \colon V= \langle \xi_1, \dots, \xi_k \rangle, \{ \xi_i \}_{i=1}^k \in \mathcal{V}_k \right\}.\]
Also for these operators, representation formulas and Liouville type results have been shown in \cite{BGT}. Moreover, one can choose the normalization constants $C_{n, s}$ such that $\JJ_k^\pm$ converge to the truncated Laplacian as $s \to 1$. 

Our aim is to further extend and complement this theory, considering suitable ``mixed" operators which converge to the truncated Laplacian as $s \to 1$, and include as special cases $\I_k^\pm $ and $\JJ_k^\pm$, see Section \ref{sec:prelimin} below for the precise definition. In order to give a rough idea of the operators we have in mind, a toy model is given by the following operator
\[ \sup_{\xi} \sup_{V \perp \xi} \left( \I_{\xi}u(x) + \JJ_V u(x) \right), \]
where $V$ is, for instance, a $2$-dimensional space in $\R^N$. More generally, instead of summing the $1$-dimensionally nonlocal operators $\I_\xi$ and taking the supremum (infimum), as we did in defining $\I_k^\pm$, here we will consider sums of $\JJ_V$ of (possibly) different dimensions.

This paper is intended as an extension of the results in \cite{BGS, BGT} to these operators. We stress that in most cases, the results presented here are new even for the operators $\JJ_k^\pm$, $1 < k < N$. 
In particular we will show that also this class of general nonlocal degenerate operators lacks continuity, see Section \ref{sec:continuity}, and we study validity of comparison and maximum principles in Section \ref{sec:maximum}. Section \ref{sec:hopf} is devoted to the proof of a Hopf type lemma, whereas in Section \ref{sec:dirichlet} we focus on existence for the corresponding Dirichlet problem, and on some properties of the principal eigenvalues. 
We conclude remarking that representation formulas for this class of operators are not trivial to get, and only some particular cases can be taken into account, see Appendix \ref{sec:repres}.

\section{Definition and preliminaries}\label{sec:prelimin}

Choose $1\le \ell\le N$. Let $1 \le k_1 \le \dots \le k_\ell \le N$ such that 
\[ k:=\sum_{j=1}^\ell k_j \]
 satisfies $1 \le k \le N$. 
Let us denote 
\begin{align*} \nonumber {\sup}^{k_1, \dots, k_\ell}  &:= \sup_{\{ \xi_j^1 \}_{j=1}^{k_1} \in  \mathcal{V}_{k_1}} \sup_{\{ \xi_j^2 \}_{j=1}^{k_2} \in  \mathcal{V}_{k_1, k_2} } \dots \sup_{\{ \xi_j^\ell \}_{j=1}^{k_\ell} \in  \mathcal{V}_{k_1, \dots, k_\ell} } \\
 & = \sup \Big \{ \{ \xi_j^1 \}_{j=1}^{k_1} \in  \mathcal{V}_{k_1}, \, \{ \xi_j^2 \}_{j=1}^{k_2} \in  \mathcal{V}_{k_1, k_2}, \, \dots, \, \{ \xi_j^\ell \}_{j=1}^{k_\ell} \in  \mathcal{V}_{k_1, \dots, k_\ell} \Big \}, 
 \end{align*}
where $\mathcal{V}_{k_1}$ is the collection of all $k_1$-orthonormal sets of $\R^N$, and 
$ \mathcal{V}_{k_1, \dots, k_t} $, $t \ge 2$, represents the collection of all $k_t$-orthonormal sets of $\R^N$ which are orthogonal to the space generated by the vectors $\xi_j^s$, with $j=1, \dots, k_s$ and $s=1, \dots, t-1$. 
Let also 
\begin{equation}\label{defV} V_i:=\langle \xi_j^i \rangle_{j=1}^{k_i}. \end{equation}
Then for $u \in L^\infty (\R^N) \cap C^2(\Omega)$ we define 
\[ \K_{k_1, \dots, k_\ell}^+ u(x):= {\sup}^{k_1, \dots, k_\ell}  \sum_{i=1}^\ell \JJ_{V_i} u(x). \]
Analogously, we define $\K_{k_1, \dots, k_\ell}^-$ taking the infimum in place of the supremum.

\begin{remark}
Notice that  
\[ \K_{k_1, \dots, k_\ell}^+ u(x)= \sup_{\{ \xi_j \}_{j=1}^k \in  \tilde{\mathcal{V}}_k}  \sum_{i=1}^\ell \JJ_{\tilde V_i} u(x), \]
where 
\[ \tilde V_i= \langle \xi_j \rangle, \text{ with } j \in \left\{ \sum_{t=1}^{i-1} k_t +1, \dots, \sum_{t=1}^{i} k_t \right\} \]
and $\tilde{\mathcal{V}}_k$ is the set of all \textit{ordered} $k$-uples of orthonormal vectors in $\R^N$.

\end{remark}

\begin{remark}
By definition 
\[ {\sup}^{k_1}  =  \sup_{\{ \xi_j^1 \}_{j=1}^{k_1} \in  \mathcal{V}_{k_1}}. \]
Thus, if $\ell=1$, then $k=k_1$, and $\K_{k}^{\pm} = \JJ_k^\pm$. In particular, if $\ell=1$, and $k_1=1$, then $\K_1^{\pm} =\JJ_1^\pm=\I_1^\pm$, whereas if $\ell=1$, and 
$k_1=N$, then $\K_{N}^{\pm} =\JJ_N^\pm=-(-\Delta)^s$. 

Also, if $k_1= \dots = k_\ell=1$, then $k=\ell$ and 
\[ {\sup}^{1, \dots, 1}\sum_{i=1}^\ell \JJ_{V_i} u(x) = \sup_{\{ \xi_j \}_{j=1}^{k} \in  \mathcal{V}_{k}} \sum_{i=1}^\ell \I_{\xi_i } u(x), \]
so that $\K_{1, \dots, 1}^\pm = \I_k^\pm$. 
In particular, if $\ell=N$, then $\K_{1, \dots, 1}^\pm=\I_N^\pm$.

We finally notice that if $\ell \ne 1$, then $\K_{k_1, \dots, k_\ell}^\pm $ does not coincide with the fractional Laplacian, even if $k=N$. 
\end{remark}

\begin{remark}\label{rmk:normalizing constant}
We point out that if we choose the normalization constants $C_{k_i,s}$ such that 
\begin{equation}\label{asymptotic} \frac{C_{k_i, s} |\mathcal{S}^{k_i-1}|}{4k_i(1-s)} \to 1 \quad \text{ as } s \to 1^-, \end{equation}
then
\[ \K_{k_1, \dots, k_\ell}^\pm u(x) \to \mathcal{P}_k^\pm u(x) \quad \text{ as } s \to 1^-, \]
see also \cite[Lemma 6.1]{BGT}. An explicit formula for these constants can be found in \cite[Equations (3.1.10) and (3.1.15)]{BV}.
\end{remark}

Less regularity on the function $u$ can be imposed, once we deal with the notion of viscosity solutions, see \cite{BCI, BarlesImbert}. For definitions and main properties of viscosity solutions in the classical local framework we refer to the survey \cite{CIL}. 

\begin{definition}  
Let $f \in C(\Omega \times \R)$. We say that $u \in L^\infty(\R^N) \cap LSC(\Omega) $ (respectively $USC(\Omega)$) is a (viscosity) supersolution (respectively subsolution) to 
\begin{equation}\label{eq def sol} \K_{k_1, \dots, k_\ell}^+ u +f(x, u(x)) = 0 \text{ in } \Omega \end{equation}
if for every point $x_0 \in \Omega$ and every function $\varphi \in C^2(B_\rho(x_0))$, $\rho >0$, such that $x_0$ is a minimum (resp. maximum) point to $u - \varphi$, one has  
\begin{equation*}\label{cond super} \mathcal{K}(u, \varphi, x_0, \rho) +f(x_0, u(x_0)) \le 0 \quad \text{(resp. $\ge 0$)}\end{equation*}
where 
\begin{align*} \mathcal{K}(u, \varphi, x_0, \rho) = {\sup}^{k_1, \dots, k_\ell}  \sum_{i=1}^\ell  &C_{k_i,s}\bigg\{\int_{B_\rho(0)}\frac{\varphi(x_0+\sum_{j=1}^{k_i} \tau_j \xi_j^i) - \varphi(x_0)}{(\sum_{j=1}^{k_i} \tau_j^2)^{\frac{k_i+2s}{2}}} \, d\tau_1 \dots d \tau_{k_i}  \\
&+ \int_{B_\rho(0)^c} \frac{u(x_0+\sum_{j=1}^{k_i} \tau_j \xi_j^i)-u(x_0)}{(\sum_{j=1}^{k_i} \tau_j^2)^{\frac{k_i+2s}{2}}} \, d\tau_1 \dots d \tau_{k_i}\bigg\}.  \end{align*}
 We say that a continuous function $u$ is a  solution of \eqref{eq def sol} if it is both a supersolution and a subsolution of \eqref{eq def sol}.
We analogously define viscosity sub/super solutions for the operator $\K_{k_1, \dots, k_\ell}^-$, taking the infimum   in place of the supremum. 
\end{definition}

\section{Lack of continuity and related phenomena}\label{sec:continuity}
In this section  we study continuity properties of the maps $x \mapsto \K_{k_1, \dots, k_\ell}^\pm u(x)$. As for the case of the operator $\I_k^\pm$, see \cite{BGS}, assuming $u \in C^2(\Omega) \cap L^\infty(\R^N)$ is in general not enough to guarantee the continuity of $\K_{k_1, \dots, k_\ell}^\pm u(x)$ with respect to $x$. 

Assume that $\ell \ne 1$, or $\ell=1$ and $k_1< N$. The case $\ell=1$, $k_1=N$ reduces to the fractional Laplacian  (which is continuous). 
Consider the function 
\begin{equation*}
u(x)=\left\{\begin{array}{rl}
0 &  \text{if $|x|\leq1$, or $\exists i=1, \dots, \ell$ s.t. $x \in \langle e_j \rangle_{j \in \mathcal{A}_i}$}\\
-1 & \text{otherwise,}
\end{array}\right.
\end{equation*}
where 
\begin{equation}\label{Ai} \mathcal{A}_i=\left \{\sum_{j=1}^{i-1} k_j +1, \dots, \sum_{j=1}^{i} k_j \right \}, \, i=1, \dots, \ell. \end{equation}
Notice that $u \equiv 0$ if and only if $\ell=1$ and $k_1=N$.
Set $\Omega=B_1(0)$. The map
$$
x\in\Omega\mapsto\K_{k_1, \dots, k_\ell}^+ u(x)
$$
is well defined, since $u$ is bounded in $\R^N$ and smooth in $\Omega$. We shall prove that it is not continuous at $x=0$.

Let us first compute the value $\K_{k_1, \dots, k_\ell}^+ u(0)$. Since $u\leq0$ in $\R^N$ it turns out that 
\begin{equation*}
\JJ_{V_{i}} u(0)=C_{k_i, s} \int_{\R^{k_i}} \frac{u(\sum_{j=1}^{k_i} \tau_j \xi_{j}^i )}{(\sum_{j=1}^{k_i} \tau_j^2)^{\frac{k_i+2s}{2}}}\,d\tau_1 \dots d\tau_{k_i} \leq0, \quad i=1, \dots, \ell,
\end{equation*}
where $V_{i}$ is defined in \eqref{defV}, for any choice of $\xi_j^i$, $j=1, \dots, k_i$. 
Hence
\begin{equation}\label{eq1}
{\sup}^{k_1, \dots, k_\ell}   \sum_{i=1}^\ell\JJ_{V_i}  u(0)\leq0.
\end{equation}
On the other hand, choosing the vectors $e_1,\ldots,e_k$ of the standard basis, we obtain that 
\begin{equation}\label{eq2}
\JJ_{\langle e_j \rangle _{j \in \mathcal{A}_i}} u(0)=0, \quad i=1, \dots, \ell.
 \end{equation}
Hence by \eqref{eq1}-\eqref{eq2} 
\[
\K_{k_1, \dots, k_\ell}^+ u(0)=0.
 \]

Now we are going to prove that 
$$
\limsup_{n\to+\infty}\K_{k_1, \dots, k_\ell}^+ u\left(\frac1ne_N\right)<0
$$
where $e_N=(0,\ldots,0,1)$. 

We preliminary notice that $u(\frac1n e_N)=0$ as $\frac1n e_N \in  B_1$. 
We first consider $\ell=1$, $k_1 < N$. Then 
the space $V_{1}+\frac1n e_N$ does not coincide with  the space $\langle e_j \rangle_{j \in \mathcal{A}_1}= \langle e_1, \dots, e_{k_1} \rangle$, since $k_1 < N$. 
Thus, also using $u \le 0$, 
\[
\begin{split}
\JJ_{V_1} u\left(\frac1n e_N\right)&=C_{k_1, s} \int_{V_1 }\frac{u(\frac1ne_N+z)}{\abs{z}^{k_1+2s}}\,d \mathcal{H}^{k_1}(z)\\
&\le - C_{k_1, s}\int_{V_1 \cap \{ z: \abs{z} \ge \tau_1(z, n), \langle z, e_N \rangle>0 \} } \frac{1}{\abs{z}^{k_1+2s}}\,d \mathcal{H}^{k_1}(z)
\end{split}
\]
where 
\[ \tau_1(z, n)=-\frac{\left\langle \hat z,e_N\right\rangle}{n}+\sqrt{\left(\frac{\left\langle \hat z,e_N\right\rangle}{n}\right)^2+1-\frac{1}{n^2}}, \, \quad \qquad \hat z=\frac{z}{|z|}. \]
Notice that
\[  \tau_1(z, n) \le \sqrt{1-\frac{1}{n^2}}\]
hence
\[\begin{split} \JJ_{V_1} u\left(\frac1n e_N\right) &\le - C_{k_1, s}\int_{V_1\cap \{ z: \abs{z} \ge \sqrt{1-\frac{1}{n^2}}, \langle z, e_N \rangle >0 \} } \frac{1}{\abs{z}^{k_1+2s}}\,d \mathcal{H}^{k_1}(z)\\
&= - \frac 12 C_{k_1, s}\int_{\sqrt{1-\frac{1}{n^2}}}^{+\infty} \frac{1}{r^{k_1+2s}} r^{k_1-1} \, dr=- \frac{C_{k_1, s}}{4s(1-\frac{1}{n^2})^{s}}.
\end{split} \]
Thus 
\[
\limsup_{n\to+\infty}\K_{k_1}^+ u\left(\frac1n e_N\right)\leq -\frac{1}{4s} C_{k_1, s}<0.
\]
Let us now consider the case $\ell>1$. For each space $V_i$ two situations may occur:
\begin{enumerate}
\item $V_i + \frac1n e_N$ is contained in $\langle e_j \rangle_{j \in \mathcal{A}_l}$ for some $l$, and in this case $\JJ_{V_i} u=0$.
\item $V_i + \frac1n e_N$ is not contained in $\langle e_j \rangle_{j \in \mathcal{A}_l}$ for any $l$, and in this case we can perform the same computations above to get 
\[ \JJ_{V_i} u\left(\frac1n e_N\right) \le - \frac{C_{k_i, s}}{4s(1-\frac{1}{n^2})^{s}}.
 \]
\end{enumerate}
Notice that there exists at most one couple of indices $(\hat i, l)$ such that (1) occurs, as (1) implies that $e_N \in V_{\hat i}$, and $V_j$ is orthogonal to $V_{\hat i}$ if $j \ne \hat i$. 
Thus, 
\[
\limsup_{n\to+\infty}\K_{k_1, \dots, k_\ell}^+ u\left(\frac1n e_N\right)\leq -\frac{1}{4s} \min_{j=1, \dots \ell} \sum_{\substack{i=1\\i \ne j}}^{\ell} C_{k_{i}, s}<0,
\]
and we get the conclusion. 

The counterexample above shows that continuity in general does not hold. However, adding a global assumption, we recover (semi) continuity of the operators. The proof is completely analogous to \cite[Proposition 3.1]{BGS}. 
\begin{proposition}\label{semicont}
Let $u\in C^2(\Omega)\cap L^\infty(\R^N)$, and consider the map
\begin{equation*}
\K_{k_1, \dots, k_\ell}^\pm u:x\in\Omega\mapsto \K_{k_1, \dots, k_\ell}^\pm u(x).
\end{equation*}
If $u\in LSC(\R^N)$ (respectively $USC(\R^N)$, $C(\R^N)$) then
$\K^\pm_{k_1, \dots, k_\ell} u\in LSC(\Omega)$ (respectively $USC(\Omega)$, $C(\Omega)$).
\end{proposition}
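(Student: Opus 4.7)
The plan is to derive all three statements from a joint semi-continuity result for $(x, \xi) \mapsto F_\xi(x) := \sum_{i=1}^\ell \JJ_{V_i} u(x)$ on a neighbourhood of $x_0 \in \Omega$ times the compact set $\mathcal{V}$ of admissible orthonormal frames $\xi = \{\xi_j^i\}$ producing the spaces $V_i$ as in \eqref{defV}. Once $F_\xi(x)$ is shown to be jointly LSC (respectively USC, continuous) under the corresponding assumption on $u$, the conclusion follows by transferring semi-continuity through $\sup_\xi$ for $\K^+$ and $\inf_\xi$ for $\K^-$: a sup of LSC functions of $x$ is always LSC and an inf of USC is always USC, while the two nontrivial combinations (inf of jointly LSC for $\K^-$ and sup of jointly USC for $\K^+$) are handled via compactness of $\mathcal{V}$, by extracting a convergent subsequence of near-optimal frames $\xi^{(n)} \to \xi^*$ and invoking joint semi-continuity at $(x_0, \xi^*)$.

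To prove joint semi-continuity, fix $\rho > 0$ with $\overline{B_{2\rho}(x_0)} \subset \Omega$ and split, for $x \in B_\rho(x_0)$, each $\JJ_{V_i} u(x) = I_i^{\mathrm{in}}(x,\xi^i) + I_i^{\mathrm{out}}(x,\xi^i)$ at scale $\rho$, writing $\xi^i := \{\xi_j^i\}_{j=1}^{k_i}$. Recasting $I_i^{\mathrm{in}}$ in the symmetric form $\tfrac{C_{k_i,s}}{2}\int_{V_i \cap B_\rho}\frac{u(x+z)+u(x-z)-2u(x)}{|z|^{k_i+2s}} \, d\mathcal{H}^{k_i}(z)$ and using $u \in C^2(B_{2\rho}(x_0))$, the numerator is bounded uniformly in $(x,\xi^i)$ by a constant multiple of $|z|^2$, while $|z|^{2-k_i-2s}$ is $\mathcal{H}^{k_i}$-integrable on $V_i \cap B_\rho$; dominated convergence (after parametrizing $V_i$ via $\xi^i$) yields joint continuity of $I_i^{\mathrm{in}}$, regardless of the regularity of $u$ outside $\Omega$. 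For the outer term we write $I_i^{\mathrm{out}}(x,\xi^i) = A_i(x,\xi^i) - u(x) K_i$ with $K_i := C_{k_i,s}\int_{|\tau| \geq \rho} |\tau|^{-k_i-2s} \, d\tau$ finite, positive, and independent of $\xi^i$; the correction $u(x) K_i$ is continuous in $x$ since $u \in C^2(\Omega)$.

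The main technical point is joint semi-continuity of $A_i(x,\xi^i) = C_{k_i,s}\int_{\R^{k_i} \setminus B_\rho} u\bigl(x + \sum_j \tau_j \xi_j^i\bigr)\, |\tau|^{-k_i-2s} \, d\tau$. For any sequence $(x_n, \xi^{(n),i}) \to (x,\xi^i)$ the integrand converges pointwise in $\tau$, and if $u \in LSC(\R^N)$ its $\liminf$ is at least $u\bigl(x + \sum_j \tau_j \xi_j^i\bigr) |\tau|^{-k_i-2s}$; since $u$ is bounded, the integrand is uniformly dominated from below by $-\|u\|_{L^\infty(\R^N)}|\tau|^{-k_i-2s}$, an integrable weight on $\{|\tau|\geq\rho\}$, so Fatou's lemma gives joint LSC of $A_i$. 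The USC case is symmetric and the continuous case follows by combining the two. Summing over $i$ produces the joint semi-continuity of $F_\xi(x)$, after which the compactness argument sketched in the first paragraph concludes the proof. This is precisely where the global hypothesis $u \in LSC(\R^N)$ (and not merely $u \in LSC(\Omega)$) is indispensable, since the shifted points $x + \sum_j \tau_j \xi_j^i$ escape $\Omega$ for large $|\tau|$.
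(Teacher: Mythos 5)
Your proof is correct and follows the same strategy the paper invokes by reference to \cite[Proposition 3.1]{BGS}: split each directional integral at a fixed scale $\rho$ with $\overline{B_{2\rho}(x_0)}\subset\Omega$, use the symmetrized second-difference form together with the local $C^2$ bound and dominated convergence to get joint continuity of the inner piece, apply Fatou's lemma (with the $L^\infty(\R^N)$ bound furnishing the integrable minorant/majorant) and the global semicontinuity of $u$ to get joint semicontinuity of the outer piece, and finally pass the semicontinuity through $\sup$/$\inf$ over the compact Stiefel manifold of frames, the two ``mismatched'' cases (sup of USC, inf of LSC) being handled by extracting a convergent subsequence of near-optimal frames. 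The observation that the $u(x)K_i$ correction in the outer term is harmless because $u\in C^2(\Omega)$ is exactly the right bookkeeping. No gaps.
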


In what follows, we give some counterexamples to prove that the $\sup$ or $\inf$ in the definition of $\K_{k_1, \dots, k_\ell}^\pm$ are in general not attained under the only assumption $u \in C^2(\Omega) \cap L^\infty(\R^N)$, see \cite{BGS} for the case of the operator $\I_k^\pm$. 

Let us first consider the case $\ell=1$ and $k_1=k < N$ (recall that the case $\ell=1$, $k=N$ corresponds to the fractional Laplacian). 
Take the function
\[ u(x)= \begin{cases}
e^{-\langle x, e_N \rangle} &\text{ if } \langle x, e_N \rangle >0 \text{ and } \abs{x} >1\\
0 &\text{ otherwise.}
\end{cases}\]
Let $V$ generated by $\xi_i$, $i=1, \dots, k$, not contained in $\langle x, e_N \rangle =0$. One has
\begin{align*} \JJ_V u(0)&=\int_V \frac{u(x)}{\abs{x}^{2s+k}} \,d \mathcal{H}^{k}(x) = \int_{V \cap B_1^c \cap \{ \langle x, e_N \rangle >0 \}} \frac{e^{-\langle x, e_N \rangle} }{\abs{x}^{2s+k}} \,d \mathcal{H}^{k}(x) \\
&= \int_{\{ \abs{\tau}>1, \sum_i \tau_i \langle \xi_i, e_N \rangle>0 \}} \frac{e^{-\sum_i \tau_i \langle \xi_i, e_N \rangle }}{\abs{\tau}^{2s+k}} \, d \tau_1 \dots d \tau_k. \end{align*}
Notice that 
\[ \int_{\{ \abs{\tau}>1, \sum_i \tau_i \langle \xi_i, e_N \rangle>0 \}} \frac{e^{-\sum_i \tau_i \langle \xi_i, e_N \rangle }}{\abs{\tau}^{2s+k}} \, d \tau_1 \dots d \tau_k = \int_{\{ \abs{\tau}>1, \tau_1>0 \}} \frac{e^{-\tau_1 \langle \tilde \xi_1, e_N \rangle }}{\abs{\tau}^{2s+k}} \, d \tau_1 \dots d \tau_k \]
up to choosing a different basis for $V$ such that $\langle \tilde \xi_i, e_N \rangle=0$ for $i \ge 2$, and $\langle \tilde \xi_1, e_N \rangle>0$.
Now, calling
\[ f(y)=\int_{\{ |\tau|>1, \tau_1 >0 \}}  \frac{e^{-\tau_1 y}}{\abs{\tau}^{2s+k}} \, d \tau_1 \dots d \tau_k, \]
we have 
\[ \sup_{y \in (0, 1]} f(y)=f(0)=\begin{cases}
\frac12 \int_1^\infty \frac{1}{r^{2s+1}} \, dr =\frac 1{4s}  &\text{ if } k>1\\
\int_1^\infty \frac{1}{r^{2s+1}} \, dr =\frac 1{2s}  &\text{ if } k=1. 
\end{cases} \]
Hence 
\[ \K_{k}^+ u(0)=\begin{cases}
\frac 1{4s}&\text{ if } k>1\\
\frac 1{2s} &\text{ if } k=1. \end{cases} \]
However, this is not attained, as $\JJ_Vu(0)=0$ by definition of $u$, if  $V$  is contained in $\langle x, e_N \rangle =0$. 

We now consider the case $\ell \ge 2$. If $k_\ell=1$, we refer to \cite{BGS}. A counterexample for the general case is the following 
\[ u(x)= \begin{cases}
e^{-\langle x, e_N \rangle} &\text{ if } \sum_{i=1}^{N-k_1-1} \langle x, e_i \rangle^2=0, \langle x, e_N \rangle >0 \text{ and }\abs{x}>1\\
0 &\text{ otherwise}.
\end{cases}\]
Notice that $\JJ_V u(0)=0$ for every $V$ of dimension $> k_1+1$, as $u \ne 0$ on a space of dimension $k_1+1$, and $=0$ everywhere else. Therefore these elements never give a significant contribution to the supremum. 
As for the spaces $V$ of dimension $\le k_1+1$, we need to split the analysis into two cases.

Let us first assume $k_1=k_2=1$. Fix any orthonormal set of $\R^k$. Thus, there are at most two vectors in this set which belong to the space $ \sum_{i=1}^{N-k_1-1} \langle x, e_i \rangle^2=0 $. We distinguish three possible situations:
\begin{itemize}
\item[(i)] If they both belong to a space $V_j$ of dimension $k_1+1=2$ (if one such space exists), then this is the only space which makes a significant contribution, as $\JJ_{V_i}u(0)=0$ for $i \ne j$. We then use the case $\ell=1$ to conclude that 
\begin{equation}\label{bound 4s} \sum_{i=1}^\ell \JJ_{V_i} u(0) = \JJ_{V_j} u(0) \le \frac{1}{4s}. \end{equation}
\item[(ii)] If one of the two vectors belongs to a space of dimension $k_1=1$, and the other one to a space of dimension $>1$, then 
\begin{equation}\label{bound 2s} \sum_{i=1}^\ell \JJ_{V_i} u(0) \le \frac{1}{2s}, \end{equation}
again exploiting computations above. 
\item[(iii)] Finally, if they belong to two different spaces of dimension $k_1=1$, then we have a similar situation as in \cite{BGS}, and  
\[ \sum_{i=1}^\ell \JJ_{V_i} u(0) \le \int_{1}^\infty \frac{1+ e^{-\tau}}{\tau^{2s+1}} \, d\tau. \]
\end{itemize}
Then we conclude that for any orthonormal set of $\R^k$ 
\[ \sum_{i=1}^\ell \JJ_{V_i} u(0) \le \max \left \{ \frac{1}{4s}, \frac{1}{2s}, \int_1^\infty \frac{1+ e^{-\tau}}{\tau^{2s+1}} \, d\tau \right \} = \int_1^\infty \frac{1+ e^{-\tau}}{\tau^{2s+1}} \, d\tau. \]
From this we deduce
\[ \K_{k_1, \dots, k_\ell}^+ u(0)= \int_1^\infty \frac{1+ e^{-\tau}}{|\tau|^{2s+1}} \, d\tau, \]
as we can find a sequence which converges to this value, arguing as in \cite{BGS}. 
However, this is not attained.

We now take into account the situation in which $1=k_1<k_2$, or $k_1>1$. Again, for any orthonormal set of $\R^k$ there are at most $k_1+1$ vectors such that 
$\sum_{i=1}^{N-k_1-1} \langle x, e_i \rangle^2=0$. If they all belong to one of the spaces of dimension $k_1+1$ (if it exists), then \eqref{bound 4s} holds. If $k_1$ of them appear in a space of dimension $k_1$, then \eqref{bound 4s} holds if $k_1>1$, whereas one has \eqref{bound 2s} if $k_1=1$. In any other case, the sum is identically 0. 
Thus, by similar arguments as above, 
\[ \K_{k_1, \dots, k_\ell}^+ u(0)= 
\begin{cases}
\frac{1}{4s} &\text{ if } k_1>1 \\
\frac{1}{2s} &\text{ if } k_1=1, 
\end{cases} \]
and the supremum is not attained.


\section{Comparison and maximum principles}\label{sec:maximum}

We consider the problems
\begin{equation}\label{2505eq2}
\left\{\begin{array}{cl}
\K^\pm_{k_1, \dots, k_\ell} u+c(x)u=f(x) &  \text{in $\Omega$}\\
u=0 & \text{in $\R^N\backslash\Omega$}
\end{array}\right.
\end{equation}
and we prove a comparison principle. 
\begin{theorem}\label{comparison}
Let $\Omega\subset\R^N$ be a bounded domain and let $c(x),f(x)\in C(\Omega)$ be such that $\left\|c^+\right\|_{\infty}<\sum_{i=1}^\ell   C_{k_i, s}  \frac {1} {2s}(\text{diam}(\Omega))^{-2s}$. If $u\in USC(\overline\Omega)\cap L^\infty(\R^N)$ and $v\in LSC(\overline\Omega)\cap L^\infty(\R^N)$ are respectively sub and supersolution of \eqref{2505eq2}, then $u\leq v$ in  $\Omega$.
\end{theorem}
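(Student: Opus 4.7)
I will argue by contradiction using the Crandall--Ishii doubling-of-variables scheme adapted to the nonlocal setting, in the spirit of the treatment of $\I_k^\pm$ in \cite{BGS}. Suppose that $M:=\sup_{\R^N}(u-v)>0$. Since $u\le 0\le v$ on $\R^N\setminus\Omega$ and $u-v$ is upper semicontinuous, $M$ is attained at some $\hat x\in\Omega$. Introduce the doubled functional $\Phi_\epsilon(x,y):=u(x)-v(y)-|x-y|^2/(2\epsilon)$, whose maximum on $\overline\Omega\times\overline\Omega$ is realized at some $(x_\epsilon,y_\epsilon)$. Standard arguments yield $(x_\epsilon,y_\epsilon)\to(\hat x,\hat x)$, $|x_\epsilon-y_\epsilon|^2/\epsilon\to 0$, and $u(x_\epsilon)-v(y_\epsilon)\to M$; moreover $x_\epsilon,y_\epsilon\in\Omega$ for $\epsilon$ small. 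Then $\varphi(x):=|x-y_\epsilon|^2/(2\epsilon)$ touches $u$ from above at $x_\epsilon$ and $\psi(y):=-|x_\epsilon-y|^2/(2\epsilon)$ touches $v$ from below at $y_\epsilon$, so both are admissible viscosity test functions.

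For $\K^+_{k_1,\dots,k_\ell}$, I exploit the asymmetric behaviour of the supremum defining $\mathcal{K}$. The subsolution condition $\mathcal{K}(u,\varphi,x_\epsilon,\rho)+c(x_\epsilon)u(x_\epsilon)\ge f(x_\epsilon)$ means that the supremum over orthonormal systems can be realized up to any $\delta>0$ by some ordered system $\{\xi_j^i\}$. The supersolution condition $\mathcal{K}(v,\psi,y_\epsilon,\rho)+c(y_\epsilon)v(y_\epsilon)\le f(y_\epsilon)$ is instead an upper bound on the same supremum and therefore holds for \emph{every} frame, in particular for the one just selected. Subtracting the two inequalities produces, with $\eta_i(\tau):=\sum_{j=1}^{k_i}\tau_j\xi_j^i$,
\begin{align*}
\sum_{i=1}^\ell C_{k_i,s}\Bigl\{&\int_{B_\rho(0)}\frac{\varphi(x_\epsilon+\eta_i(\tau))-\varphi(x_\epsilon)-\psi(y_\epsilon+\eta_i(\tau))+\psi(y_\epsilon)}{(\sum_j\tau_j^2)^{(k_i+2s)/2}}\,d\tau\\
&+\int_{B_\rho(0)^c}\frac{u(x_\epsilon+\eta_i(\tau))-v(y_\epsilon+\eta_i(\tau))-(u(x_\epsilon)-v(y_\epsilon))}{(\sum_j\tau_j^2)^{(k_i+2s)/2}}\,d\tau\Bigr\}\\
&\ge f(x_\epsilon)-f(y_\epsilon)+c(y_\epsilon)v(y_\epsilon)-c(x_\epsilon)u(x_\epsilon)-\delta.
\end{align*}

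A direct expansion gives $\varphi(x_\epsilon+\eta)-\varphi(x_\epsilon)-\psi(y_\epsilon+\eta)+\psi(y_\epsilon)=|\eta|^2/\epsilon$, so the $B_\rho$-integrals contribute $O(\rho^{2-2s}/\epsilon)$ and vanish as $\rho\to 0$ with $\epsilon$ fixed. For the tail, the maximum property of $\Phi_\epsilon$ gives $u(x_\epsilon+z)-v(y_\epsilon+z)\le u(x_\epsilon)-v(y_\epsilon)$ for every $z$, so the integrand is nonpositive. Crucially, if $|z|>d:=\mathrm{diam}(\Omega)$, then both $x_\epsilon+z$ and $y_\epsilon+z$ lie in $\R^N\setminus\Omega$, and the exterior conditions $u\le 0\le v$ force $u(x_\epsilon+z)-v(y_\epsilon+z)\le 0$; the numerator is then $\le -(u(x_\epsilon)-v(y_\epsilon))\to -M$. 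Using $\int_{\{|\tau|>d\}\subset\R^{k_i}}|\tau|^{-k_i-2s}\,d\tau=|\mathcal{S}^{k_i-1}|/(2s\,d^{2s})$ and sending $\rho\to 0$, $\delta\to 0$, $\epsilon\to 0$ (using continuity of $c,f$ at $\hat x$), I obtain
\[c(\hat x)\,M\;\ge\;M\sum_{i=1}^\ell C_{k_i,s}\,\frac{|\mathcal{S}^{k_i-1}|}{2s\,d^{2s}}\;\ge\;M\sum_{i=1}^\ell C_{k_i,s}\,\frac{1}{2s\,d^{2s}},\]
a direct contradiction with the hypothesis on $\|c^+\|_\infty$. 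The case $\K^-_{k_1,\dots,k_\ell}$ is entirely symmetric: there the near-infimal frame is extracted from the supersolution condition and the subsolution bound is valid for every frame.

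The main obstacle is the careful handling of the second step: selecting a single ordered orthonormal system compatible with both viscosity inequalities despite the asymmetry between sub- and supersolutions caused by the supremum, and performing the sequential limits $\rho\to 0$, $\delta\to 0$, $\epsilon\to 0$ while keeping the strictly negative tail contribution alive.
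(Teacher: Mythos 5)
Your proof is correct and follows essentially the same route as the paper's: doubling of variables with quadratic penalization, using the two viscosity inequalities, observing that the tail integrand is nonpositive by maximality, and extracting a strictly negative contribution from the region $|z|>\mathrm{diam}(\Omega)$ via the exterior conditions $u\le 0\le v$. Where the paper folds the two inequalities into a single $\sup$ over the difference of the integrands (using $\sup A-\sup B\le\sup(A-B)$ as in \cite{BGS}), you select a $\delta$-near-maximizing frame from the subsolution inequality and plug it into the supersolution inequality; these are the same observation packaged differently.

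One step you leave implicit and should spell out: the key translation estimate $u(x_\epsilon+z)-v(y_\epsilon+z)\le u(x_\epsilon)-v(y_\epsilon)$ for \emph{all} $z$ uses that $(x_\epsilon,y_\epsilon)$ is a maximizer over $\R^N\times\R^N$, not merely over $\overline\Omega\times\overline\Omega$. This is true for the penalization parameter small enough (because $u,v$ are globally bounded, so the quadratic term dominates once $|x-y|$ is not small, and $u-v\le 0$ off $\Omega$), and the paper records it as a separate equality citing \cite[equation~(4.7)]{BGS}. Without this justification, plugging in $x=x_\epsilon+z$, $y=y_\epsilon+z$ with $|z|$ large is not licensed, since those points leave $\overline\Omega$.
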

\begin{proof}
We show the proof only in the case $\K^+_{k_1, \dots, k_\ell}$, as the same arguments apply to $\K^-_{k_1, \dots, k_\ell}$ as well.
We argue by contradiction by supposing that there exists $x_0\in\Omega$ such that
\[
\max_{\R^N}(u-v)=u(x_0)-v(x_0)>0.
\]
Doubling the variables,  for $n\in\mathbb N$ we consider  $(x_n,y_n)\in\overline\Omega\times\overline\Omega$ such that 
\begin{equation}\label{2505eq3}
\max_{\overline\Omega\times\overline\Omega}(u(x)-v(y)-n|x-y|^2)=u(x_n)-v(y_n)-n|x_n-y_n|^2\geq u(x_0)-v(x_0).
\end{equation}
Using \cite[Lemma 3.1]{CIL}, up to subsequences, we have
\begin{equation}\label{2505eq6}
\lim_{n\to+\infty}(x_n,y_n)=(\bar x,\bar x)\in\Omega\times\Omega
\end{equation}
and 
\begin{equation}\label{2505eq7}
\lim_{n\to+\infty}u(x_n)=u(\bar x),\quad \lim_{n\to+\infty}v(x_n)=v(\bar x),\quad u(\bar x)-v(\bar x)=u(x_0)-v(x_0).
\end{equation}
We know that for $n\geq\frac{\left\|u\right\|_\infty+\left\|v\right\|_\infty}{\varepsilon^2}$  one has 
\begin{equation}\label{2505eq5}
\max_{\overline\Omega\times\overline\Omega}[ u(x)-v(y)-n|x-y|^2]=\max_{\R^N\times\R^N}[u(x)-v(y)-n|x-y|^2]\,,
\end{equation}
see for instance \cite[equation (4.7)]{BGS}. 

 Taking $\varphi_n(x):=u(x_n)+ n|x-y_n|^2 - n|x_n-y_n|^2$ and $\phi_n(y)=v(y_n)- n|x_n-y|^2 +n|x_n-y_n|^2$, we see that $\varphi_n$ touches $u$ in $x_n$ from above, while $\phi_n$ touches $v$ in $y_n$ from below. 
Let us define in order to simplify the notation
\[ \delta(w, z, \eta)=w(z+\eta)-w(z), \]
and $\tau=(\tau_1, \dots, \tau_k)$, $d\tau = d\tau_1\dots d\tau_k$.
 
 Hence we obtain (see also the proof of \cite[Theorem 4.1]{BGS}), 
\begin{equation}\label{2505eq10}
\begin{split}
f(x_n)&-f(y_n)\leq \sum_{i=1}^\ell   C_{k_i, s}  \frac{2 n\rho^{2-2s}}{1-s}+c(x_n)u(x_n)-c(y_n)v(y_n)
\\ &\hspace{1cm} +{\sup}^{k_1, \dots, k_\ell} \sum_{i=1}^\ell   C_{k_i, s} \int_{B_\rho(0)^c}  \frac{\delta(u, x_n,  \sum_{j=1}^{k_i} \tau_j \xi_j^i ) - \delta(v, y_n,  \sum_{j=1}^{k_i} \tau_j \xi_j^i )}{\abs{\tau}^{k_i+2s}}\,d\tau.
\end{split}
\end{equation}
From \eqref{2505eq3} and \eqref{2505eq5}  we have
$$
u(x)-v(y)-n|x-y|^2\leq u(x_n)-v(y_n)-n|x_n-y_n|^2\quad\forall x,y\in\R^N.
$$
Choosing in particular $x=x_n+\sum_{j=1}^{k_i} \tau_j \xi_j^i$ and $y=y_n+\sum_{j=1}^{k_i} \tau_j \xi_j^i$ we deduce that 
\[ \delta(u, x_n,  \sum_{j=1}^{k_i} \tau_j \xi_j^i) - \delta(v, y_n,  \sum_{j=1}^{k_i} \tau_j \xi_j^i) \le 0. \]
 Thus \eqref{2505eq10} implies, assuming without loss of generality that $\rho < \text{diam}(\Omega)$, 
\begin{equation}\label{2505eq11}
\begin{split}
f(x_n)-&f(y_n)\leq \sum_{i=1}^\ell   C_{k_i, s}  \frac{2 n\rho^{2-2s}}{1-s}+c(x_n)u(x_n)-c(y_n)v(y_n)\\
&\hspace{0.5cm}+ {\sup}^{k_1, \dots, k_\ell}\sum_{i=1}^\ell  C_{k_i, s} \int_{B_{\text{diam}(\Omega)} (0)^c}  \frac{\delta(u, x_n,  \sum_{j=1}^{k_i} \tau_j \xi_j^i ) - \delta(v, y_n,  \sum_{j=1}^{k_i} \tau_j \xi_j^i )}{\abs{\tau}^{k_i+2s}}\,d\tau.
\end{split}
\end{equation}
Since $\Omega\subset B_{\text{diam}(\Omega)}(x_n)$ and $x_n+\sum_{j=1}^{k_i} \tau_j \xi_j^i\notin B_{\text{diam}(\Omega)}(x_n)$ for any $\abs{\tau}\geq\text{diam}(\Omega)$, then $u(x_n+\sum_{j=1}^{k_i} \tau_j \xi_j^i)\leq0$. For the same reason $v(y_n+ \sum_{j=1}^{k_i} \tau_j \xi_j^i)\ge0$ when $\abs{\tau}\geq\text{diam}(\Omega)$. Hence 
$$
\delta(u, x_n,  \sum_{j=1}^{k_i} \tau_j \xi_j^i) - \delta(v, y_n,  \sum_{j=1}^{k_i} \tau_j \xi_j^i)\leq -(u(x_n)-v(y_n))
$$ 
and 
\begin{equation}\label{2505eq12}
\begin{split}
f(x_n)-f(y_n)\leq & \; \sum_{i=1}^\ell   C_{k_i, s}  \frac{2n\rho^{2-2s}}{1-s}+c(x_n)u(x_n)-c(y_n)v(y_n)\\
&\quad -\sum_{i=1}^\ell   C_{k_i, s}  \frac {1} {2s}(\text{diam}(\Omega))^{-2s} (u(x_n)-v(y_n)).
\end{split}
\end{equation}
Letting first $\rho\to0$, then $n\to+\infty$ and using \eqref{2505eq6}-\eqref{2505eq7} we obtain
$$
0\leq (u(x_0)-v(x_0))\left(c(\bar x)-\sum_{i=1}^\ell   C_{k_i, s}  \frac {1} {2s}(\text{diam}(\Omega))^{-2s}\right)
$$
which is a contradiction since $u(x_0)-v(x_0)>0$ and by using the assumption on 
$\left\|c^+\right\|_{\infty}$.
\end{proof}

We now recall the definition of minimum principle. 
\begin{definition}
We say that the operator $\mathcal{K}$ satisfies the weak minimum principle in $\Omega$ if 
\[ \mathcal{K} u  \le 0 \text{ in } \Omega, \quad u \ge 0 \text{ in } \R^N \setminus \Omega \quad \Longrightarrow \quad u \ge 0 \text{ in } \Omega, \]
and it satisfies the strong minimum principle in $\Omega$ if 
\[ \mathcal{K} u  \le 0 \text{ in } \Omega, \quad u \ge 0 \text{ in } \R^N \quad \Longrightarrow \quad u > 0 \text{ or } u \equiv 0 \text{ in } \Omega. \]
\end{definition}
Similarly, one defines the maximum principle. 
The weak minimum/maximum principle follows by applying the comparison principle Theorem \ref{comparison} with $v=0$ or $u=0$. However, the operators $\K_{k_1, \dots, k_\ell}^\pm$ do not always satisfy the strong maximum or minimum principle, as it is clarified in the next theorem. 

\begin{theorem}\label{SMP}
The following conclusions hold.
\begin{enumerate}
\item[(i)] The operators $\K_{k_1, \dots, k_\ell}^-$, with $k < N$, do not satisfy the strong minimum principle in $\Omega$.
\item[(ii)] The operators $\K_{k_1, \dots, k_\ell}^-$ with $k=N$ satisfy the strong minimum principle in $\Omega$. 
\item[(iii)] The operators $\K_{k_1, \dots, k_\ell} ^+$  satisfy the following implication
\[ \mathcal{K} u(x) \le 0 \text{ in } \Omega, \quad u \ge 0 \text{ in }\R^N \; \Rightarrow \;
u > 0 \text{ in } \Omega \text{ or } u \equiv 0 \text{ in } \R^N. \]
In particular, they satisfy the strong minimum principle. 
\item[(iv)] Let $k<N$, or $k=N$ and $\ell>1$. There exist functions $u$ such that $\K_{k_1, \dots, k_\ell} ^- u \le 0$ in $ \Omega$, $u \equiv 0$ in $\overline \Omega$, and $u \not\equiv 0$ in $\R^N \setminus \overline \Omega$, namely these operators do not satisfy the implication in (iii). 
\end{enumerate}
\end{theorem}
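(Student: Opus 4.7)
\medskip

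The plan is to dispatch the four items in the order (iii), (i), (iv), and finally (ii), which is the main obstacle.

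\textbf{Part (iii)} is the cleanest. Testing the viscosity inequality $\K^+_{k_1,\dots,k_\ell} u(x_0)\le 0$ at an interior zero $x_0\in\Omega$ with the constant test function $\varphi\equiv 0$ (which touches $u$ from below since $u\ge 0=u(x_0)$), one obtains
\[
{\sup}^{k_1,\dots,k_\ell}\sum_{i=1}^\ell C_{k_i,s}\int_{V_i\cap B_\rho^c}\frac{u(x_0+z)}{|z|^{k_i+2s}}\,d\mathcal{H}^{k_i}(z)\le 0,\qquad\text{for every }\rho>0.
\]
Each summand is nonnegative, so the inequality forces every term to vanish for every admissible choice of $\{V_i\}$. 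Since $k\le N$, one may freely prescribe $V_1$ to be any $k_1$-dimensional subspace (completing with $V_2,\dots,V_\ell$ inside $V_1^\perp$); letting $\rho\to 0$ by monotone convergence, one gets $\int_{V_1} u(x_0+z)/|z|^{k_1+2s}\,d\mathcal{H}^{k_1}(z)=0$. Combined with LSC and $u\ge 0$, this yields $u\equiv 0$ on $x_0+V_1$. As every point of $\R^N$ belongs to some such translate, $u\equiv 0$ in $\R^N$.

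\textbf{Parts (i) and (iv)} are handled by explicit counterexamples. For (i) with $k<N$ take $u(x)=\phi(x_N)$, where $\phi\in C^2\cap L^\infty$ is nonnegative with $\phi(0)=0$ and $\phi\not\equiv 0$: because $k\le N-1$ one may choose all orthonormal vectors inside $\{x_N=0\}$, so every $V_i\subset\{x_N=0\}$ and $u(x+z)=u(x)$ along each $V_i$, giving $\K^-u\equiv 0$ while $u$ vanishes on $\Omega\cap\{x_N=0\}$ without being identically zero. For (iv) with $k<N$ the same $u$ works if we additionally require $\phi\equiv 0$ on an interval containing the $x_N$-projection of $\overline\Omega$ and $\phi>0$ outside. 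For (iv) with $k=N$ and $\ell>1$ we use a ``cross-vanishing'' product: e.g.\ for $\ell=2$, $k_1=k_2=1$, $N=2$, $\Omega\subset B_R$,
\[
u(x_1,x_2)=(|x_1|-R)_+(|x_2|-R)_+.
\]
Then $u\equiv 0$ on $\overline\Omega$, $u>0$ in the corners $\{|x_1|>R,\,|x_2|>R\}$, and for $x\in\Omega$ the lines $x+\R e_1$ and $x+\R e_2$ each sit in a strip where one factor vanishes, so $\JJ_{\R e_1}u(x)=\JJ_{\R e_2}u(x)=0$ and thus $\K^-u(x)\le 0$. Analogous products $\prod_i \psi_i$, with each $\psi_i$ vanishing on a slab containing $\overline\Omega$ and aligned with $\langle e_j\rangle_{j\in\mathcal{A}_i}^\perp$, treat general $(k_1,\dots,k_\ell)$.

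\textbf{Part (ii)} is the delicate step. Testing $\K^-_{k_1,\dots,k_\ell} u(x_0)\le 0$ with $\varphi\equiv 0$ at a zero $x_0$ gives
\[
\inf_{\{V_i\}}\sum_{i=1}^\ell C_{k_i,s}\int_{V_i\cap B_\rho^c}\frac{u(x_0+z)}{|z|^{k_i+2s}}\,d\mathcal{H}^{k_i}(z)=0,
\]
as each summand is nonnegative. Pick a minimizing sequence $\{V_i^{(n)}\}$; by compactness of the product Grassmannian, extract a subsequential limit $\{V_i^*\}$ which remains an orthogonal decomposition and, crucially because $k=N$, satisfies $V_1^*\oplus\cdots\oplus V_\ell^*=\R^N$. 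Fatou's lemma applied in each summand, together with the LSC and nonnegativity of $u$, yields $u\equiv 0$ on $x_0+V_i^*$ for every $i$, hence on the whole skeleton $x_0+\bigcup_i V_i^*$. To upgrade this to $u\equiv 0$ in $\Omega$ one argues that $Z:=\{x\in\Omega:u(x)=0\}$ is both closed (by LSC) and open in $\Omega$, then uses connectedness. The openness is obtained by iterating the previous step at every zero encountered in the growing skeleton and exploiting the freedom in the minimizing sequence to produce many distinct limiting decompositions through a single zero.

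The main obstacle is precisely this propagation in (ii): each individual limit skeleton is only $\max_i k_i$-dimensional, so it is not a neighborhood of $x_0$. One must show that varying the minimizing sequence produces a family of skeletons whose union is dense near $x_0$, and then invoke LSC together with $u\ge 0$ to upgrade density of $\{u=0\}$ into $u\equiv 0$ on an entire neighborhood. This delicate interplay between the non-attainment of the infimum, the compactness of the Grassmannian, and the LSC of $u$ is the crux of the argument, and is what separates (ii) from the essentially automatic proof of (iii).
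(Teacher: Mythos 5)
Your arguments for parts (i), (iii), and (iv) are sound in substance. Part (iii) is the same sup-versus-inf dichotomy the paper uses: since $\K^+$ is a supremum, the viscosity inequality at a zero forces $\sum_i\JJ_{V_i}u(x_0)\le 0$ for \emph{every} admissible $\{V_i\}$, and nonnegativity of $u$ then kills each term. Part (i) is the same cylindrical counterexample as the paper's. Your part (iv) counterexample for $k=N$, $\ell>1$ is a genuinely different construction from the paper's (the paper uses a $\{0,1\}$-valued function vanishing on a ball and on the coordinate subspaces $\langle e_j\rangle_{j\in\mathcal A_i}$), and the ``cross-vanishing product'' idea is correct in spirit; but as written $(|x_1|-R)_+(|x_2|-R)_+$ is unbounded, hence not in $L^\infty(\R^N)$, and the integrals may even diverge for $s<\tfrac12$. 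You must truncate, e.g.\ replace each factor by $\min\{(|x_i|-R)_+,1\}$, which keeps the key property that each factor vanishes on a slab containing $\overline\Omega$ in the relevant coordinate block.

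The genuine gap is in part (ii), and you flag it yourself. Your plan is: extract a limit decomposition $\{V_i^*\}$ from a minimizing sequence, deduce $u\equiv 0$ on the skeleton $x_0+\bigcup_iV_i^*$, then try to upgrade via an open–closed argument by ``iterating at every zero in the growing skeleton.'' This does not close: the skeleton always has positive codimension (each $V_i^*$ has dimension $k_i\le N$, and their union is not a neighborhood), and merely knowing $\{u=0\}$ is dense near $x_0$ does not yield $u\equiv 0$ for an LSC nonnegative function (take $u=0$ on $\mathbb Q^N$, $u=1$ elsewhere). You would need to re-run the viscosity inequality at each of the new zeros and control the resulting family of decompositions, which is precisely what you leave as ``the crux.'' The paper avoids the propagation problem altogether. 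Its argument for (ii) is a Hopf/boundary-point scheme: if $u(x_0)=0$ but $u(y)>0$ somewhere, slide a ball $B_R(y)\subset\Omega$ until it touches $\{u=0\}$ at a point $x_1$ with $u>0$ on $\overline{B_R(y)}\setminus\{x_1\}$. For any orthonormal frame occurring in an $\varepsilon$-optimal decomposition, pigeonholing gives one direction $\xi_{\bar j}$ with $\langle\xi_{\bar j},\widehat{y-x_1}\rangle\ge 1/\sqrt N$; this direction lies in some block $V_{\bar i}$, and after rotating within $V_{\bar i}$ one gets a whole orthonormal $k_{\bar i}$-frame pointing into the ball with inner products $\ge 1/\sqrt{k_{\bar i}N}$. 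The positive cone in $V_{\bar i}$ then lands inside $\overline{B_R(y)}$ where $u\ge\min_{\overline B_R\setminus B_\rho}u>0$, producing a quantitative positive lower bound for $\JJ_{V_{\bar i}}u(x_1)$ independent of $\varepsilon$, a contradiction. The crucial point is that $k=N$ guarantees the frame $\{\xi_j\}$ spans $\R^N$, so such an ``inward'' direction always exists; for $k<N$ this fails, which is exactly the content of (i).
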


\begin{remark}
The case $k=N$, $\ell=1$, not treated in item \textit{(iv)}, corresponds to the fractional Laplacian, for which the implication in \textit{(iii)} is already known \cite[Corollary 4.2]{MusinaNaz}. 
\end{remark}

\begin{remark}
We notice that since $\K_{k_1, \dots, k_\ell}^+ (-u)= -\K_{k_1, \dots, k_\ell}^- u$, corresponding results hold for the maximum principle. 
\end{remark}

\begin{remark}
We wish to cite here the recent paper \cite{BGI22}, in which
the geometry of the sets of minima for supersolutions of equations involving the operators $\I_k^\pm$ is characterized. 
\end{remark}

\begin{proof}
(i) We slightly adapt the counterexample in Proposition 2.2 in \cite{BGT}. Assume without loss of generality that $0 \in \Omega$. 
Let $\varphi$ be a smooth bounded function of one variable which attains the minimum in $0$. Set $u(x)=\varphi(x_N)$. Then, recalling $k < N$, 
\[ u(x+ \sum_{j=1}^{k_i}  \tau_j e_{a_i(j)})=u(x) \, \text{ for } i=1, \dots, \ell \]
where
\begin{equation}\label{aij} a_i(j):=a(j+\sum_{t=1}^{i-1} k_t). \end{equation}
Hence $\JJ_{V_i}=0$ for any $i=1, \dots, \ell$ with $V_i= \langle e_{a_i(j)} \rangle_{j=1, \dots, k_i}$, thus 
\[ \K_{k_1, \dots, k_\ell}^- u(x) \le 0. \]

(ii)
Let us assume that $u$ satisfies
\[ \begin{cases}
\K_{k_1, \dots, k_\ell}^- u  \le 0 &\text{ in } \Omega \\
u \ge 0 &\text{ in } \R^N
\end{cases} \] 
with $k=N$, 
and let $u(x_0)=0$ for some $x_0 \in \Omega$. We want to prove that $u \equiv 0$ in $\Omega$. Let us proceed by contradiction, and assume there exists $y \in \Omega$ such that $u(y) >0$. Let us choose a ball $B_R(y)$ such that 
\begin{itemize}
\item $B_R(y) \subset \Omega$
\item there exists $x_1 \in \partial B_R(y)$ such that $u(x_1)=0$
\item $u(x) >0$ for all $x \in \overline B_R(y) \setminus \{ x_1 \}$
\end{itemize}
Then, by definition of viscosity super solutions, for fixed $\rho>0$ and $\varphi \in C^2(B_\rho(x_1))$, for which $x_1$ is a minimum point for $u-\varphi$, and for every $\varepsilon>0$, there exists a orthonormal basis $ \{ \xi_1, \dots, \xi_N \}=\{ \xi_1(\varepsilon), \dots, \xi_N(\varepsilon) \}$ such that 
\begin{align}\label{eqn:IN strong} \varepsilon \ge \sum_{i=1}^\ell  &C_{k_i,s}\Big(\int_{B_\rho(0)}\frac{\varphi(x_1+\sum_{j=1}^{k_i} \tau_j \xi_{a_i(j)}) - \varphi(x_1)}{(\sum_{j=1}^{k_i} \tau_j^2)^{\frac{k_i+2s}{2}}} \, d\tau_1 \dots d \tau_{k_i}  \\
\nonumber &+ \int_{B_\rho(0)^c} \frac{u(x_1+\sum_{j=1}^{k_i} \tau_j \xi_{a_i(j)})-u(x_1)}{(\sum_{j=1}^{k_i} \tau_j^2)^{\frac{k_i+2s}{2}}} \, d\tau_1 \dots d \tau_{k_i}\Big),
\end{align}
with $a_i(j)$ as defined in \eqref{aij}. 

Moreover, we know that there exists $\bar j=\bar j(\varepsilon)$ such that 
\begin{equation}\label{direzione entrante} \langle \xi_{\bar j}, \widehat{y-x_1} \rangle \ge \frac{1}{\sqrt{N}}, \quad \text{ with } \widehat{y-x_1} =\frac{y-x_1}{\abs{y-x_1}}. \end{equation}
Notice that there exists $\bar i$ such that $\bar j \in \{a_{\bar i}(1), \dots, a_{\bar i}(k_{\bar i}) \}$. 
Also, we can choose different orthonormal vectors $\{\bar \xi_1, \dots, \bar \xi_{N} \}$ such that $\{ \bar \xi_{a_{\bar i}(1)}, \dots, \bar \xi_{a_{\bar i}(k_{\bar i})} \}$ span the same 
$k_{\bar i}$-dimensional space as $\{ \xi_{a_{\bar i}(1)}, \dots, \xi_{a_{\bar i}(k_{\bar i})} \}$, let us call it $V_{\bar i}$, and such that 
\begin{equation}\label{scelta xi} \langle \bar \xi_{a_{\bar i}(j)}, \widehat{y-x_1} \rangle  \ge \frac{1}{\sqrt{k_{\bar i} N}}, \quad \text{for any $j=1, \dots, k_{\bar i}$}. \end{equation}
Indeed, this is obvious if $k_{\bar i}=1$. If $k_{\bar i}>1$, the intersection of $B_{R}(y)$ with $V_{\bar i}$ is a $k_{\bar i}$ dimensional sphere. Then, one can take the $k_{\bar i}$ dimensional cube inscribed in this sphere having $x_1$ as a vertex, and consider as $\bar \xi_{a_{\bar i}(1)}, \dots, \bar \xi_{a_{\bar i}(k_{\bar i})}$ 
the directions of the edges starting from $x_1$. Notice that in the worst case, the main diagonal of this cube has length $\frac{2R}{\sqrt{N}}$ due to \eqref{direzione entrante}. This gives the estimate \eqref{scelta xi}, see also Figure \ref{max}. 

 \begin{figure}
\centering
\includegraphics[width=0.7\textwidth]{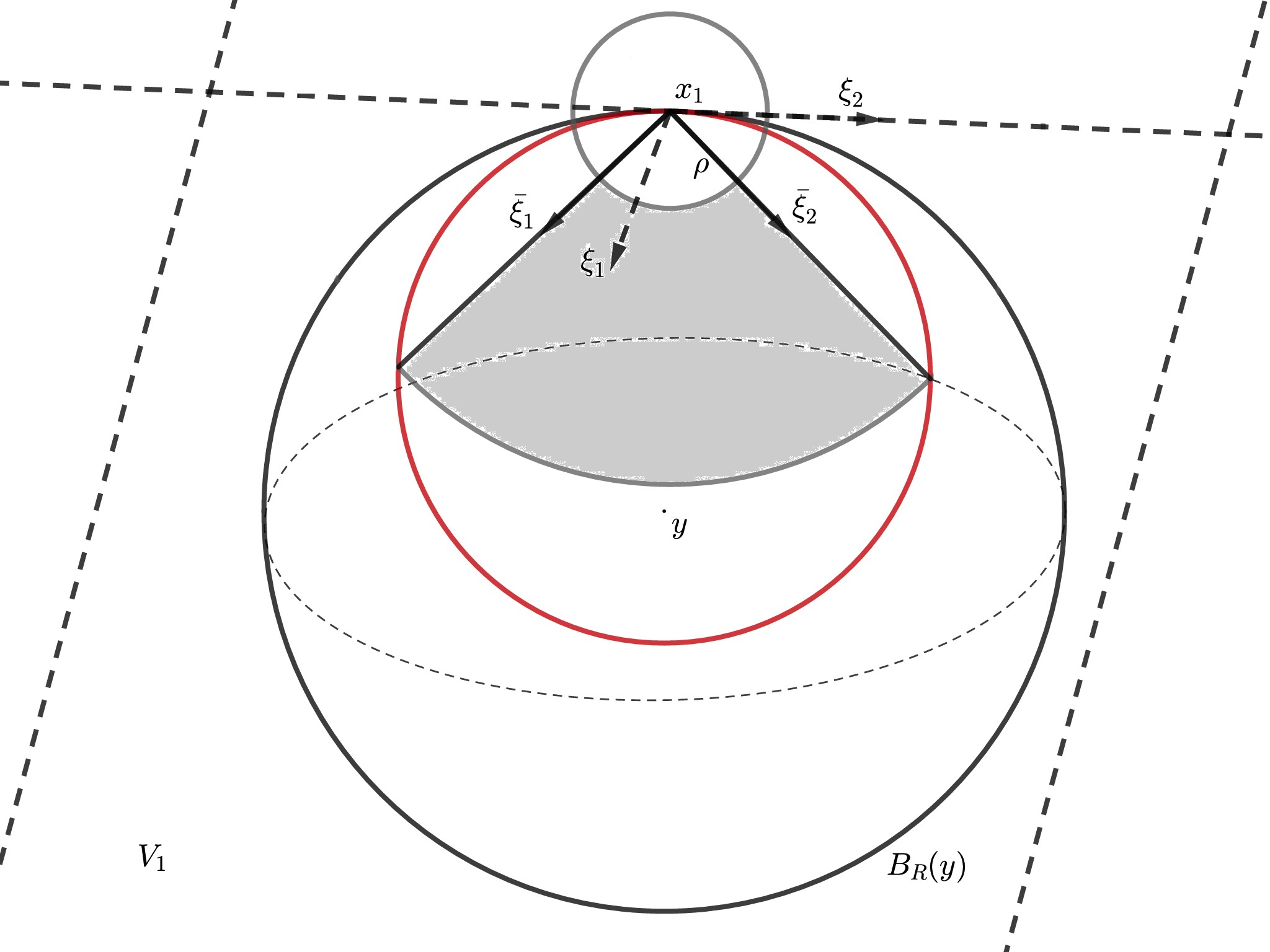}
\caption{Case $N=3$, $\bar i=1$, $k_{\bar i}=2$.  The intersection between the plane $V_1$ and the sphere $B_R(y)$ is represented by the red circle. Moreover, the grey region represents the points $x_1+\sum_{j=1}^{k_{\bar i}} \tau_j \bar \xi_{a_{\bar i}(j)}=x_1 + \tau_1 \bar \xi_1 + \tau_2 \bar \xi_2$ such that $\rho < \abs{\tau} < 2R/ \sqrt{k_{\bar i} N}$, $\tau_i >0$, $i=1, 2$, in the worst case, namely when the diameter of the red circle is $2R/\sqrt N$. }
\label{max}
\end{figure}

Fix $\rho < \frac{2R}{\sqrt{k_{\ell} N}}$, and choose $\varphi \equiv 0$ on $B_\rho(x_1)$. Notice that the choice of $\rho$ does not depend on $\varepsilon$.
In particular, one has $\rho <2R\langle \bar \xi_{a_{\bar i}(j)}, \widehat{y-x_1} \rangle$ for any $j=1, \dots, k_{\bar i}$. 
Then, taking into account that $u(x_1)=0$ and $u \ge 0$, from \eqref{eqn:IN strong} one has
\begin{align*} \varepsilon &\ge \sum_{i=1}^\ell  C_{k_i,s}\int_{B_\rho(0)^c} \frac{u(x_1+\sum_{j=1}^{k_i} \tau_j \bar \xi_{a_{i}(j)})}{(\sum_{j=1}^{k_i} \tau_j^2)^{\frac{k_i+2s}{2}}} \, d\tau_1 \dots d \tau_{k_i} \\
& \ge  C_{k_{\bar i},s} \int_{B_\rho(0)^c} \frac{u(x_1+\sum_{j=1}^{k_{\bar i}} \tau_j \bar \xi_{a_{\bar i}(j)})}{(\sum_{j=1}^{k_{\bar i}} \tau_j^2)^{\frac{k_{\bar i}+2s}{2}}} \, d\tau_1 \dots d \tau_{k_{\bar i}} \\
& \ge C_{k_{\bar i},s} \int_{\{\rho < \abs{\tau} < \frac{2R}{\sqrt{k_{\bar i} N}}, \tau_i >0, \forall i\}} \frac{u(x_1+\sum_{j=1}^{k_{\bar i}} \tau_j \bar \xi_{a_{\bar i}(j)})}{(\sum_{j=1}^{k_{\bar i}} \tau_j^2)^{\frac{k_{\bar i}+2s}{2}}} \, d\tau_1 \dots d \tau_{k_{\bar i}} 
\end{align*}

Notice that if 
\[ \tau \in \left\{\rho < \abs{\tau} < \frac{2R}{\sqrt{k_{\bar i} N}}, \tau_i >0, \forall i \right\}, \]
then 
\[ x_1+\sum_{j=1}^{k_{\bar i}} \tau_j \bar \xi_{a_{\bar i}(j)} \in \overline B_R(y) \setminus B_\rho(x_1)\]
 due to \eqref{scelta xi}, see also Figure \ref{max}. Thus, 

\begin{align*}
\varepsilon &\ge C_{k_{\bar i},s}  \left( \min_{\overline B_R(y) \setminus B_\rho(x_1) } u  \right)\; \int_{\{\rho < \abs{\tau} < \frac{2R}{\sqrt{k_{\bar i} N}}, \tau_i >0, \forall i\}}   \frac{1}{\abs{\tau}^{k_{\bar i}+2s}} \, d\tau_1 \dots d \tau_{k_{\bar i}}\\
 &= \frac1{2^{k_{\bar i}}} C_{k_{\bar i},s}  \left( \min_{\overline B_R(y) \setminus B_\rho(x_1) } u \right) \int_{\rho < \abs{\tau} < \frac{2R}{\sqrt{k_{\bar i} N}}}   \frac{1}{\abs{\tau}^{k_{\bar i}+2s}} \, d\tau_1 \dots d \tau_{k_{\bar i}}\\
 & \ge \min_i \frac{C_{k_{i},s}}{2^{k_i}} \left( \min_{\overline B_R(y) \setminus B_\rho(x_1) } u\right) \int_{\rho}^{\frac{2R}{\sqrt{k_{\ell} N}}} \frac{1}{r^{1+2s}} \, dr\\
 &=C(\rho, R, N, s) \min_{\overline B_R(y) \setminus B_\rho(x_1) } u >0.
\end{align*}

This gives the contradiction if $\varepsilon$ is small enough.

\textit{(iii)} Take $u$ which satisfies the assumptions of the minimum principle, and assume there exists $x_0 \in \Omega$ such that $u(x_0)=0$. Choose any orthonormal basis of $\R^N$ $\{ \xi_1, \dots, \xi_{N} \}$. Thus, 
\[ 0 \ge \K_{k_1, \dots, k_\ell}^+ u(x_0)  \ge \sum_{i=1}^\ell \mathcal{J}_{V_i} u(x_0) 
 = \sum_{i=1}^\ell C_{k_i, s} \int_{\R^{k_i}} \frac{u(x_0 + \sum_{j=1}^{k_i} \tau_j \xi_j^i ) }{|\tau|^{k_i+2s}} \, d\tau.
\]
Hence, since $u \ge 0$ in $\R^N$, we conclude that $u \equiv 0$ on every space $V_i  + x_0$. Since the directions $\xi_i$ are arbitrary, we get $u \equiv 0$ on $\R^N$. 

\textit{(iv)} Notice that if $k < N$ this is immediate as the minimum principle is not satisfied. Let $k=N$ and $\ell >1$, and take 
\[ u(x)=\begin{cases}
0  &\text{ if } \exists i=1, \dots, \ell \text{ such that } x \in \langle e_j \rangle_{j \in \mathcal{A}_i}, \text { or if } x \in \overline B_1(0)\\
1 &\text{ otherwise,} 
\end{cases} \]
where $\mathcal{A}_i$ where defined in \eqref{Ai}, 
and notice that $\JJ_{\langle e_j \rangle_{j \in \mathcal{A}_i}}u(x)=0$, thus
\[ \K_{k_1, \dots, k_\ell}^- u (x)\le 0 \text{ in } B_1(0). \]
Moreover, $u \equiv 0$ in $\overline B_1(0)$, however $u \not\equiv 0$ in $\R^N \setminus \overline B_1(0)$ as $\ell >1$. 
\end{proof}

\section{Hopf type Lemma}\label{sec:hopf}
The next lemma provides a suitable barrier function in the spirit of Section 3.6 in \cite{BV}.  
\begin{lemma}\label{funzione barriera}
If $V$ is a $k$-dimensional space of $\R^N$, then 
\[ \JJ_V (R^2-\abs{x}^2)^s_+= - C_{k,s} \beta(1-s, s) \frac{\omega_k}2 \, \text{ in } B_R(0), \]
where 
\[ \beta(1-s, s)=\int_0^1 t^{-s} (1-t)^{s-1} \, dt \]
is the Beta function. 
In particular, 
\[ \K_{k_1, \dots, k_\ell}^+ (R^2-\abs{x}^2)^s_+= \K_{k_1, \dots, k_\ell}^- (R^2-\abs{x}^2)^s_+= - \frac{\beta(1-s, s)}2  \sum_{i=1}^\ell \, C_{k_i, s} \omega_{k_i} \, \text{ in } B_R(0). \]
\end{lemma}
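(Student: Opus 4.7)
The plan is to reduce the $k$-dimensional surface integral defining $\JJ_V(R^2-\abs{x}^2)^s_+$ to the ordinary $s$-fractional Laplacian on $\R^k$ applied to a torsion-type profile, and then invoke the classical Getoor identity. First, I fix an orthonormal basis $\{\xi_1,\dots,\xi_k\}$ of $V$ and decompose $x = x_V + x_V^\perp$ according to $\R^N = V \oplus V^\perp$. For $z \in V$, Pythagoras gives $\abs{x+z}^2 = \abs{x_V+z}^2 + \abs{x_V^\perp}^2$; setting $\rho^2 := R^2 - \abs{x_V^\perp}^2$, which is strictly positive for $x \in B_R(0)$, I obtain
\[
(R^2-\abs{x+z}^2)^s_+ = (\rho^2 - \abs{x_V+z}^2)^s_+.
\]
Identifying $V$ with $\R^k$ through the basis and letting $a \in \R^k$ denote the coordinate vector of $x_V$ (so $\abs{a}=\abs{x_V}<\rho$), the substitution $\sigma = a+\tau$ in the principal-value integral yields
\[
\JJ_V(R^2-\abs{x}^2)^s_+ = C_{k,s}\,\mathrm{P.V.}\!\int_{\R^k}\frac{(\rho^2-\abs{\sigma}^2)^s_+ - (\rho^2-\abs{a}^2)^s}{\abs{\sigma-a}^{k+2s}}\,d\sigma.
\]

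The right-hand side is, up to the factor $C_{k,s}$, minus the $s$-fractional Laplacian on $\R^k$ of the function $w(\sigma) = (\rho^2-\abs{\sigma}^2)^s_+$ evaluated at the interior point $a$. A scaling $\sigma = \rho\sigma'$, $a = \rho a'$ shows the integral is independent of $\rho$, reducing matters to the case $\rho = 1$. I would then invoke the classical Getoor--Dyda identity (see e.g.\ Section 3.6 of \cite{BV}): for $\abs{y}<1$,
\[
\mathrm{P.V.}\!\int_{\R^k}\frac{(1-\abs{\sigma}^2)^s_+ - (1-\abs{y}^2)^s}{\abs{\sigma-y}^{k+2s}}\,d\sigma = -\beta(1-s,s)\,\frac{\omega_k}{2}.
\]
This gives the stated formula for $\JJ_V$. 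Because the resulting value depends on $V$ only through its dimension $k$ and is independent of $x \in B_R(0)$, the sum $\sum_{i=1}^\ell \JJ_{V_i}(R^2-\abs{x}^2)^s_+$ is also independent of the choice of the orthonormal family $\{\xi_j^i\}$; taking supremum or infimum is therefore trivial and produces the announced expression for $\K^\pm_{k_1,\dots,k_\ell}$.

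The main obstacle is the Getoor--Dyda identity itself with the precise constants $\beta(1-s,s)$ and $\omega_k/2$. A direct derivation would pass to spherical coordinates in $\R^k$ (producing the factor $\omega_k$), split the radial integral into $\{\abs{\sigma}<1\}$ and $\{\abs{\sigma}>1\}$, and match the resulting one-dimensional integrals with the Beta function through a substitution of the form $t = (1-\abs{\sigma}^2)/(1-\abs{y}^2)$; the principal value near $\sigma = a$ is handled by the standard even symmetrization of the kernel around $a$. A quick consistency check on constants is that, using the standard normalization $C_{k,s} = 4^s s\,\Gamma(k/2+s)/(\pi^{k/2}\Gamma(1-s))$ together with $\omega_k = 2\pi^{k/2}/\Gamma(k/2)$ and $\beta(1-s,s) = \Gamma(1-s)\Gamma(s)$, the product $C_{k,s}\beta(1-s,s)\omega_k/2$ collapses to the well-known Getoor value $2^{2s}\Gamma(1+s)\Gamma(k/2+s)/\Gamma(k/2)$, confirming the formula; since this is classical, I would cite rather than reprove it.
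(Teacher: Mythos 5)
Your proof is correct and takes essentially the same approach as the paper: both reduce $\JJ_V$ of the torsion function to the $k$-dimensional fractional Laplacian of $(1-|\cdot|^2)_+^s$ at an interior point and then invoke the Getoor identity from Section 3.6 of \cite{BV}. The paper performs the reduction in a single explicit change of variables $t_i=-\langle x,\xi_i\rangle+\tau_i\sqrt{1-|x|^2+\sum_j\langle x,\xi_j\rangle^2}$, which is exactly your Pythagoras decomposition, affine shift, and scaling to $\rho=1$ combined into one substitution.
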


\begin{proof}
Let $R=1$ for simplicity. Let $\{ \xi_1, \dots, \xi_k \}$ a $k$-dimensional orthonormal basis, and 
\[ t_i=-\langle x, \xi_i \rangle + \tau_i \sqrt{\sum_{j=1}^k \langle x, \xi_j \rangle^2 - \abs{x}^2+1} \]
thus
\[ (1-\sum_{i=1}^k \tau_i^2)(1-\abs{x}^2+ \sum_{i=1}^k \langle x, \xi_i \rangle^2) = 1-\abs{x}^2-\sum_{i=1}^k t_i^2 - 2 \sum_{i=1}^k t_i \langle x, \xi_i \rangle \]
Set 
\[ \tilde u : \R^k \to \R \quad \tilde u (t)= (1- \abs{t}^2)_+^s \]
and  
\[y_i=\frac{\langle x, \xi_i \rangle}{\sqrt{1-\abs{x}^2 + \sum_{j=1}^k \langle x, \xi_j \rangle^2}}.  \] 
One has for $x \in B_1(0)$  
\[ \begin{split}
\JJ_Vu(x)&=\int_{\R^k} \frac{(1-\abs{x+\sum_{i=1}^k t_i \xi_i}^2)_+^s- (1-\abs{x}^2)^s}{(\sum_{i=1}^k t_i^2)^{\frac{2s+k}{2}}} \, d t_1 \dots d t_k \\
&= \int_{\R^k} \frac{  (1-\sum_{i=1}^k \tau_i^2)_+^s-  (1- \sum_{i=1}^k y_i^2)^s}{(\sum_{i=1}^k (\tau_i - y_i)^2)^{\frac{2s+k}{2}}} \, d\tau_1 \dots d\tau_k \\
&= -(-\Delta)^s \tilde u(y_1, \dots, y_k)=- C_{k, s} \beta(1-s, s) \frac{\omega_k}2. \qedhere
\end{split}  \]
\end{proof}

 The proof of the following Proposition is inspired by \cite[Proposition 4.9]{BGS}, see also \cite{GrecoServadei}. 

\begin{proposition}\label{hopf}
Let $\Omega$ be a bounded $C^2$ domain, $k=N$, and let $u$ satisfy  
\[ \begin{cases}
\K_{k_1, \dots, k_\ell}^- u \le 0 &\text{ in } \Omega \\
u \ge 0 &\text{ in } \R^N \setminus \Omega.
\end{cases}\]
Assume $u \not \equiv 0$ in $\Omega$. Then there exists a positive constant $c=c(\Omega, u)$ such that 
\begin{equation}\label{eq hopf} u(x) \ge c\, d(x)^s\quad\forall x\in\overline\Omega. \end{equation} 
\end{proposition}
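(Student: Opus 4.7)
The plan is to follow the standard barrier strategy, adapting to the nonlocal setting the technique from the proof of Theorem~\ref{SMP}(ii). First I would apply Theorem~\ref{SMP}(ii) (valid because $k=N$) to upgrade $u\not\equiv 0$ in $\Omega$ to the strict positivity $u>0$ in $\Omega$. The $C^2$ regularity of $\partial\Omega$ provides a uniform interior-ball radius $r_0>0$: for each $x_0\in\partial\Omega$ there is $B_{r_0}(y_0)\subset\Omega$ tangent to $\partial\Omega$ at $x_0$. As barrier I would use $\psi(x):=(r_0^2-|x-y_0|^2)_+^s$, for which Lemma~\ref{funzione barriera} gives the explicit value $\K_{k_1,\ldots,k_\ell}^\pm\psi\equiv -C_0$ on $B_{r_0}(y_0)$, where $C_0:=\tfrac{\beta(1-s,s)}{2}\sum_{i=1}^\ell C_{k_i,s}\omega_{k_i}>0$. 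With $m:=\min_{\overline{B_{r_0/2}(y_0)}}u>0$, the goal is the global pointwise estimate $u\ge\eta\psi$ on $\R^N$ for a sufficiently small $\eta>0$ depending on $m$, $r_0$, and $C_0$; once proved, the asymptotics $\psi(x)\ge r_0^s|x-x_0|^s$ along the inward-normal segment through $x_0$ give $u(x)\ge c\,d(x)^s$ in a uniform collar of $\partial\Omega$, and this extends to all of $\overline\Omega$ because $u$ is bounded away from zero on compact subsets of $\Omega$ while $d^s$ is uniformly bounded there.

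I would prove $u\ge\eta\psi$ by contradiction. Suppose $m^*:=\inf_{\R^N}(u-\eta\psi)<0$; since $u-\eta\psi\ge m/2$ on $\overline{B_{r_0/2}(y_0)}$ (provided $\eta<m/(2r_0^{2s})$) and equals $u\ge 0$ outside $B_{r_0}(y_0)$, the infimum is attained at some $x^*\in U:=B_{r_0}(y_0)\setminus\overline{B_{r_0/2}(y_0)}$. Then $\varphi(x):=\eta\psi(x)+m^*$ is $C^2$ near $x^*$ and touches $u$ from below at $x^*$. Applying the viscosity supersolution definition for $\K^-_{k_1,\ldots,k_\ell}$, for every $\varepsilon>0$ there are orthonormal vectors $\{\xi_j^i\}$ with $\mathcal{K}(u,\varphi,x^*,\rho)\le\varepsilon$. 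Writing $u(x^*+z)-u(x^*)=\eta[\psi(x^*+z)-\psi(x^*)]+[(u-\eta\psi)(x^*+z)-m^*]$ and using that $\JJ_{V_i^*}\psi(x^*)$ depends only on $\dim V_i^*$ (Lemma~\ref{funzione barriera}), the inequality reduces to
\[
-\eta C_0+E(V^*)\le\varepsilon,\qquad E(V^*):=\sum_{i=1}^\ell C_{k_i,s}\int_{B_\rho(0)^c\cap V_i^*}\frac{(u-\eta\psi)(x^*+z)-m^*}{|z|^{k_i+2s}}\,dz\ge 0,
\]
so $E(V^*)\le\eta C_0+\varepsilon$ for the adversarially chosen $V^*$.

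The remaining step is a uniform geometric bound $E(V^*)\ge C\,m$, with $C>0$ depending only on $r_0,\rho,N,s$ and $\{k_i\}$, valid for every admissible $V^*$ and every $x^*\in U$. Coupled with the supersolution inequality it yields $Cm\le\eta C_0+\varepsilon$, and choosing $\eta<Cm/(2C_0)$ and $\varepsilon$ small produces the contradiction. The mechanism is precisely the $k=N$ trick of Theorem~\ref{SMP}(ii): among the $N$ orthonormal vectors $\{\xi_j\}$, at least one, $\xi_{\bar j}$, satisfies $\langle \xi_{\bar j},\widehat{y_0-x^*}\rangle\ge 1/\sqrt N$; letting $V_{\bar i}^*$ be its containing subspace, one rotates inside $V_{\bar i}^*$ (which preserves $\JJ_{V_{\bar i}^*}$) to obtain $k_{\bar i}$ vectors with inner products $\ge 1/\sqrt{k_{\bar i}N}$ toward $y_0$, and integrates over the corresponding positive orthant cone, exploiting the lower bound $(u-\eta\psi)-m^*\ge m/2$ on the part of the cone entering $\overline{B_{r_0/2}(y_0)}$ and $(u-\eta\psi)-m^*\ge|m^*|$ on the part exiting $B_{r_0}(y_0)$ (where $u\ge 0$).

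The hard part is exactly this geometric lower bound on $E(V^*)$: unlike Theorem~\ref{SMP}(ii), where $x_1\in\partial B_R(y)$ guarantees the cone enters the ball, here $x^*$ sits inside the annulus $U$ and, when $\ell>1$, an adversarial $V^*$ can arrange that none of the affine subspaces $x^*+V_i^*$ meets $\overline{B_{r_0/2}(y_0)}$. The resolution is to combine the \emph{interior} contribution (proportional to $m$ whenever the cone reaches $\overline{B_{r_0/2}}$) with the unavoidable \emph{exterior} contribution (the cone must exit $B_{r_0}$ after some bounded length, where $u\ge 0$ forces an $|m^*|$-size lower bound), using the $k=N$ spanning property to show that in every admissible configuration at least one of these terms delivers the required $Cm$-type lower bound and thus the contradiction.
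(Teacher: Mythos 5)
Your overall strategy — using the explicit barrier from Lemma~\ref{funzione barriera}, looking at the contact point between $u$ and a scaled barrier, invoking the viscosity supersolution inequality there, and exploiting the $k=N$ spanning property — is the same as the paper's, and your reduction to the inequality $-\eta C_0 + E(V^*)\le\varepsilon$ is correct. The difference is that the paper works with a \emph{sequence} of barriers $w_n=w/n$ (rather than a single scale $\eta$), locates the maxima $x_n$ of $w_n-u$, and shows that $u(x_n)\to 0$ forces $|x_n|\to r_0$. Passing to the limit then makes the unwanted terms vanish: the barrier contributions $\sum_i\JJ_{V_i}w_n\to 0$ and $u(x_n)\to 0$ disappear, and the cone parameters $t_1(n),t_2(n)$ converge to fixed values $t_1^0,t_2^0$ so that no uniform-in-$x^*$ geometric estimate is needed. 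Your fixed-$\eta$ version is the more demanding variant precisely because it requires the uniform lower bound on $E(V^*)$ that you correctly identify as the crux.

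On that crux, however, the resolution you sketch has a gap. The exterior contribution you invoke bounds $E(V^*)$ below by a multiple of $|m^*|$, not of $m$; and since $|m^*|\le\eta\,r_0^{2s}$ (because $u(x^*)\ge 0$ and $\psi(x^*)\le r_0^{2s}$), this gives $E(V^*)\gtrsim |m^*|$, which is of the same order as the $\eta C_0$ term and produces \emph{no} contradiction. So the ``interior or exterior'' dichotomy does not close the argument. In fact the whole dichotomy is unnecessary: the obstruction you worry about (an adversarial $V^*$ making every affine slice $x^*+V_i^*$ miss the interior ball) disappears once you take the interior ball to have radius $r_0'=r_0\sqrt{1-1/(2N)}$ rather than $r_0/2$. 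If $\xi_{\bar j}$ is the basis vector with $|\langle\xi_{\bar j},\widehat{y_0-x^*}\rangle|\ge 1/\sqrt N$ and $V_{\bar i}^*$ is the block containing it, then $|P_{(V_{\bar i}^*)^\perp}(y_0-x^*)|\le |y_0-x^*|\sqrt{1-1/N}< r_0'$, so $x^*+V_{\bar i}^*$ always meets $\overline{B_{r_0'}(y_0)}$; the paper's rotation argument (the cube inscribed in the cross-sectional ball, leading to \eqref{scelta xi}) then gives a cone over which the interior contribution alone furnishes the required $c\,m$-type lower bound uniformly in $x^*$, with $m=\min_{\overline B_{r_0'}(y_0)}u$. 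With that replacement your argument closes. One further small omission: you need the weak minimum principle (Theorem~\ref{comparison} with $v\equiv 0$) to first obtain $u\ge 0$ in $\Omega$, since Theorem~\ref{SMP}(ii) requires $u\ge 0$ in all of $\R^N$, not merely in $\R^N\setminus\Omega$.
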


Notice that the conclusion is not true for the operators $\K_{k_1, \dots, k_\ell}^-$, $k < N$. 
Indeed, consider the function
\[ u(x)=\begin{cases}
e^{-\frac{1}{1-\abs{x}^2}} &\text{ if } \abs{x} < 1 \\
0 &\text{ if } \abs{x} \ge 1 \end{cases} \]
fix $x \in B_1(0)$, and take $\{ \xi_i \} \in \mathcal{V}_k$ such that $\langle x, \xi_i \rangle=0$ for any $i=1, \dots, k$. Hence
\[ \abs{x+\sum_{j=1}^{k_i} \tau_j \xi_{a_i(j)}}^2 =\abs{x}^2 + |\tau|^2 \ge \abs{x}^2 \]
where $a_i(j)$ is defined in \eqref{aij}, and using  the radial monotonicity of $u$ 
\[ \K_{k_1, \dots, k_\ell}^- u(x) \le  0 \text{ in } B_1(0). \]
However, $u$ clearly does not satisfy 
\[ u(x) \ge c\, d(x)^\gamma \]
for any positive constants $c, \gamma$. 

As a consequence of Proposition \ref{hopf}, we immediately obtain the following 
\begin{corollary}\label{cor hopf}
Let $\Omega$ be a bounded $C^2$ domain, and let $u$ satisfy  
\[ \begin{cases}
\K_{k_1, \dots, k_\ell}^+ u \le 0 &\text{ in } \Omega \\
u \ge 0 &\text{ in } \R^N \setminus \Omega.
\end{cases}\]
Assume $u \not \equiv 0$ in $\Omega$. Then  
\[ u(x) \ge c\, d(x)^s \]
for some positive constant $c=c(\Omega, u)$. 
\end{corollary}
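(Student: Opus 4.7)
The proof is an immediate reduction to Proposition \ref{hopf}. The plan is to exploit the pointwise (and viscosity-level) inequality
\[
\K_{k_1,\dots,k_\ell}^-\, u(x) \;\le\; \K_{k_1,\dots,k_\ell}^+\, u(x),
\]
which holds by the very definition of the two operators: both are built from the same family of sums $\sum_{i=1}^\ell \JJ_{V_i} u(x)$ indexed by ordered orthonormal systems in $\mathcal{V}_{k_1}\times\cdots\times\mathcal{V}_{k_1,\dots,k_\ell}$, and the infimum of a collection of real numbers is always dominated by the supremum of the same collection. The same remark applies at the viscosity level to the test quantities $\mathcal{K}(u,\varphi,x_0,\rho)$ appearing in the definition of supersolution: the expression for $\K^-$ replaces the outer $\sup^{k_1,\dots,k_\ell}$ by $\inf^{k_1,\dots,k_\ell}$ applied to exactly the same sums, so the former is no larger than the latter.

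Given this monotonicity, any $u$ that is a viscosity supersolution of $\K_{k_1,\dots,k_\ell}^+ u \le 0$ in $\Omega$ is automatically a viscosity supersolution of $\K_{k_1,\dots,k_\ell}^- u \le 0$ in $\Omega$. Since moreover $u \ge 0$ in $\R^N\setminus\Omega$ and $u \not\equiv 0$ in $\Omega$, all hypotheses of Proposition \ref{hopf} are satisfied (in particular, $k=N$ as implicitly required by that result), and applying it directly produces a constant $c=c(\Omega,u)>0$ with $u(x) \ge c\, d(x)^s$ for all $x\in\overline{\Omega}$, as claimed.

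There is no real obstacle here; once the inequality $\K^- \le \K^+$ at the viscosity-test level is noted, the corollary is literally a one-line consequence of Proposition \ref{hopf}. The only point worth emphasising is that the constant $c$ inherited from Proposition \ref{hopf} still depends only on $\Omega$ and $u$, which is automatic because the data entering the hypotheses of that proposition coincide with the data of the corollary.
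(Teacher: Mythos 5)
Your observation that $\K_{k_1,\dots,k_\ell}^- \le \K_{k_1,\dots,k_\ell}^+$ (both pointwise and at the level of the viscosity test quantities) is correct, and the deduction that a viscosity supersolution of $\K^+ u \le 0$ is automatically a viscosity supersolution of $\K^- u \le 0$ is valid. This is indeed the short route the paper has in mind, and for $k=N$ your argument is complete: the hypotheses of Proposition~\ref{hopf} are all verified and the estimate follows at once.

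However, there is a gap in coverage. The corollary, as stated, does \emph{not} assume $k=N$ — it is asserted for arbitrary $k_1,\dots,k_\ell$ — whereas Proposition~\ref{hopf} is only proved under the hypothesis $k=N$. Your reduction therefore only establishes the case $k=N$. Worse, for $k<N$ the reduction through $\K^-$ cannot possibly work: the paper exhibits, immediately after Proposition~\ref{hopf}, a radial bump function $u(x)=\exp(-1/(1-|x|^2))$ showing that the Hopf estimate \emph{fails} for supersolutions of $\K^-_{k_1,\dots,k_\ell} u\le 0$ whenever $k<N$. So knowing $\K^- u \le 0$ is genuinely too weak when $k<N$; the extra strength of being a $\K^+$-supersolution (namely, that the viscosity inequality $\sum_i \mathcal{K}_i(u,\varphi,x_0,\rho)\le 0$ holds for \emph{every} ordered orthonormal system, not just for one nearly-optimal system produced by the $\inf$) is essential. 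To handle $k<N$ one should instead re-run the proof of Proposition~\ref{hopf}: the step where the proof selects a vector $\xi_{\bar j}$ with $\langle \hat x_n, \xi_{\bar j}\rangle \ge 1/\sqrt{N}$ relied on the $\K^-$ system being a full basis of $\R^N$; for $\K^+$ one is free to \emph{choose} the orthonormal system so that one of the directions points toward the interior of the ball, after which the barrier argument and the contradiction proceed unchanged for any $1\le k\le N$. (The strong minimum principle, used at the start of that proof, is also available for $\K^+$ with arbitrary $k$ by Theorem~\ref{SMP}(iii).) You should either add that adaptation, or explicitly note that your argument covers only $k=N$.
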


\begin{proof}[Proof of Proposition \ref{hopf}]
By the weak and strong minimum principles, see Theorem \ref{comparison} and Theorem \ref{SMP}-(ii), $u>0$ in $\Omega$. Therefore, for any $K$ compact subset of $\Omega$ we have
\begin{equation}\label{bound} \inf_{y \in K} u(y) >0. \end{equation} 
 Without loss of generality we can further assume that $u$ vanishes somewhere in $\partial\Omega$, otherwise the conclusion is obvious.\\
Since $\Omega$ is a $C^2$ domain, there exists a positive constant $\varepsilon$, depending on $\Omega$, such that for any $x \in \Omega_\varepsilon=\{ x \in \Omega: d(x) < \varepsilon \}$ there are a unique $z \in \partial \Omega$ for which $d(x)=\abs{x-z}$ and a ball $B_{2\varepsilon}(\bar y) \subset \Omega$ such that $\overline{B_{2\varepsilon}(\bar y)} \cap (\R^N \setminus \Omega)=\{ z \}$.  \\
Now we consider the radial function $w(x)={((2\varepsilon)^2-\abs{x- \bar y}^2)}^s_+$ which satisfies, see  Lemma \ref{funzione barriera}, the equation 
\[ \K_{k_1, \dots, k_\ell}^- w=- \, \sum_{i=1}^{\ell} C_{k_i, s} \frac{\omega_{k_i}}{2} \beta(1-s, s) \, \text{ in } B_{2\varepsilon}(\bar y). \]
We claim that there exists $\bar n= \bar n(u, \varepsilon)$ such that 
\[ u \ge w_{\bar n} \text{ in } \R^N, \]
where 
\[ w_n(x) = \frac 1n w(x). \]
This implies \eqref{eq hopf}. Indeed, for any $x\in\Omega_\varepsilon$
\begin{equation}\label{2406eq1} w_{\bar n} (x)=\frac 1{\bar n}  ((2\varepsilon)^2-\abs{x- \bar y}^2)^s_+ \ge \frac {2\varepsilon}{\bar n} \abs{x-z}^s =\frac {2\varepsilon}{\bar n} d(x)^s, \end{equation}
and
 \begin{equation}\label{2406eq2}
u(x)\geq \min_{y\in\Omega\backslash\Omega_\varepsilon}\frac{u(y)}{d(y)^s}d(x)^s\quad\forall x\in \Omega\backslash\Omega_\varepsilon.
\end{equation}
 From \eqref{2406eq1}-\eqref{2406eq2} we obtain \eqref{eq hopf} with $c=\min\left\{\frac {2\varepsilon}{\bar n} ,\min_{y\in\Omega\backslash\Omega_\varepsilon}\frac{u(y)}{d(y)^s}\right\}$.

We proceed by contradiction in order to prove the claim, hence, we suppose that for any $n \in \mathbb{N}$ 
\[ v_n = w_n-u \]
is USC and positive somewhere. From now on, for simplicity of notation, we assume that $B_{2\varepsilon}(\bar y) =B_1(0)$. 
Since
\[ w_n =0 \le u \text{ in } \R^N \setminus B_1(0), \]
we know that it attains its positive maximum  $x_n$ in $B_1(0) \subset \Omega$. One has 
\[ 0 <u(x_n) < w_n (x_n). \]
Also, $w_n \to 0$ uniformly in $\R^N$, thus
\begin{equation}\label{2406eq3} \lim_{n \to +\infty} u(x_n) =0. \end{equation}
Therefore, recalling \eqref{bound}, $ \abs{x_n} \to 1 $
as $n \to \infty$, hence in particular $x_n \in B_1(0) \setminus B_{r_0}(0)$, where $r_0=\sqrt{1 - \frac 1{2N}}$, and $d(x_n) < (1-r_0)/2$ for $n$ large enough.

Since $\K_{k_1, \dots, k_\ell}^- u \le 0$ in $\Omega$,  we know that for any $n \in \N$ there exists $\{ \xi_1(n), \dots, \xi_N(n) \}$ orthonormal basis of $\R^N$ such that 
\begin{align}\label{u supersol hopf} \sum_{i=1}^\ell  &C_{k_i,s}\Big(\int_{B_\rho(0)}\frac{\varphi(x_n+\sum_{j=1}^{k_i} \tau_j \xi_{a_i(j)} (n)) - \varphi(x_n)}{(\sum_{j=1}^{k_i} \tau_j^2)^{\frac{k_i+2s}{2}}} \, d\tau_1 \dots d \tau_{k_i}  \\
\nonumber &+ \int_{B_\rho(0)^c} \frac{u(x_n+\sum_{j=1}^{k_i} \tau_j \xi_{a_i(j)}(n))-u(x_n)}{(\sum_{j=1}^{k_i} \tau_j^2)^{\frac{k_i+2s}{2}}} \, d\tau_1 \dots d \tau_{k_i}\Big) \le \frac 1n, \end{align}
where $a_i(j)$ is defined in \eqref{aij}. 
Since $\{ \xi_1(n), \dots, \xi_N(n) \}$ is a basis of $\R^N$, then there exists at least one $\xi_{\bar j}(n)$ such that  $\langle \hat x_n, \xi_{\bar j}(n) \rangle \ge \frac{1}{\sqrt N}$. We can assume without loss of generality that this $\xi_{\bar j}$ appears in $\JJ_{V_1}$, namely $\bar j \le k_1$, and also we can suppose that for each $j \le k_1$ one has $\langle \hat x_n, \xi_j (n) \rangle \ge \frac{1}{\sqrt{k_1 N}}$ (this is true up to choosing a different orthonormal basis for $V_1$, see the discussion after \eqref{scelta xi}).

Let us choose $\rho = d(x_n) < (1- r_0)/2$, and $\varphi(x)=w_n (x) \in C^2(B_\rho(x_n))$ as test function. 
We consider the left hand side of \eqref{u supersol hopf}, and we aim at providing a positive lower bound independent on $n$, which will give the desired contradiction. 

Let us start with the second integral in \eqref{u supersol hopf} for each fixed $i=2, \dots, \ell$, and let us notice that since $x_n$ is a maximum point for $v_n$ 
\[ \int_{B_\rho(0)^c} \frac{u(x_n+\sum_{j=1}^{k_i} \tau_j \xi_{a_i(j)}(n))-u(x_n)}{(\sum_{j=1}^{k_i} \tau_j^2)^{\frac{k_i+2s}{2}}}  \ge \int_{B_\rho(0)^c} \frac{w_n(x_n+\sum_{j=1}^{k_i} \tau_j \xi_{a_i(j)}(n))-w_n(x_n)}{(\sum_{j=1}^{k_i} \tau_j^2)^{\frac{k_i+2s}{2}}} . \]
On the other hand, in order to estimate the integral for $i=1$, we first  define
\[ A_1(n)= \{ \tau \in B_\rho(0)^c: t_1(n) < \tau_i < t_2(n), i=1, \dots, k_1 \}. \]
and
\[ A_2(n) = A_1(n)^c \cap B_\rho(0)^c, \]
where
\[ \sqrt{k_1} t_1(n) =-\frac{|x_n|}{\sqrt N} - \sqrt{1- \frac1{2N} - |x_n|^2\left(1-\frac 1 N \right)}, \]
\[ \sqrt{k_1} t_2(n) =-\frac{|x_n|}{\sqrt N} +  \sqrt{1- \frac1{2N} - |x_n|^2\left(1-\frac 1 N \right)}. \]

Then we make the following decomposition 
\begin{equation}\label{int2} \int_{B_\rho(0)^c} \frac{u(x_n+\sum_{j=1}^{k_1} \tau_j \xi_{j}(n))-u(x_n)}{(\sum_{j=1}^{k_1} \tau_j^2)^{\frac{k_1+2s}{2}}} =J_1(n)+ J_2(n), \end{equation}
where  
\[ J_1(n)= \int_{A_1(n)} \frac{u(x_n+\sum_{j=1}^{k_1} \tau_j \xi_{j}(n))-u(x_n)}{(\sum_{j=1}^{k_1} \tau_j^2)^{\frac{k_1+2s}{2}}}, \]
\[ J_2(n)= \int_{A_2(n)} \frac{u(x_n+\sum_{j=1}^{k_1} \tau_j \xi_{j}(n))-u(x_n)}{(\sum_{j=1}^{k_1} \tau_j^2)^{\frac{k_1+2s}{2}}}.\]

Integral $J_2$ can be estimated as above, using the fact that $x_n$ is a maximum point for $v_n$.

In order to estimate $J_1$, we preliminary notice that if $\tau \in A_1(n)$, then $x_n+\sum_{j=1}^{k_1} \tau_j \xi_{j}(n) \in B_{r_0}(0)$. Indeed, for each $j$ fixed
\[ \tau_j^2 + \frac{2 \tau_j |x_n|}{\sqrt{k_1 N}} \le \frac{1}{k_1} \left(1- \frac 1{2N} - |x_n|^2\right), \]
thus, as $\tau_j < t_2(n) <0$, 
\[ \abs{x_n+\sum_{j=1}^{k_1} \tau_j \xi_{j}(n)}^2 \le  \abs{x_n}^2+ |\tau|^2 + \frac{2 \sum_{j=1}^{k_1} \tau_{j} \abs{x_n}}{\sqrt{k_1 N}} \le 1 -\frac{1}{2N} = r_0^2, \] 
Also, for $n$ large we can assume $\rho=d(x_n) $ small enough, and such that 
\[  \{ \tau: t_1(n) < \tau_i < t_2(n), i=1, \dots, k_1 \} \subset B_\rho(0)^c, \]
since as $n\to+\infty$, $d(x_n)\to0$, and 
\[ t_1(n) \to t_1^0= \frac{1}{\sqrt{k_1 N}} \left( -1- \frac{1}{\sqrt 2} \right), \]
\[  t_2(n) \to t_2^0= \frac{1}{\sqrt{k_1 N}} \left( -1+\frac{1}{\sqrt 2} \right). \]

We now use the fact that 
$u(x_n +\sum_{j=1}^{k_1} \tau_j \xi_{j}(n))\geq\min_{\overline B_{r_0}} u >0$. 
We obtain
\begin{align*} J_1(n) & \ge  \int_{A_1(n)}  \frac{u(x_n +\sum_{j=1}^{k_1} \tau_j \xi_{j}(n)) - 2u(x_n)}{(\sum_{j=1}^{k_1} \tau_j^2)^{\frac{k_1+2s}{2}}}\,d\tau \\
 &\ge \left( \min_{\overline B_{r_0}} u-2u(x_n)\right)\,  \int_{\{ t_1(n) < \tau_j < t_2(n), \, \forall j\}}  \frac{1}{(\sum_{j=1}^{k_1} \tau_j^2)^{\frac{k_1+2s}{2}}}. 
\end{align*}

Now, putting estimates above together and recalling \eqref{u supersol hopf}, one has
\begin{equation}\label{eq:hopf} \begin{split} \frac 1n \ge & \sum_{i=1}^\ell \JJ_{V_i} w_n  - \int_{A_1(n)}  \frac{w_n(x_n+\sum_{j=1}^{k_1} \tau_j \xi_{j}(n))-w_n(x_n)}{(\sum_{j=1}^{k_1} \tau_j^2)^{\frac{k_1+2s}{2}}}  \,d\tau_1 \dots d\tau_{k_1}  \\
&\hspace{2cm}+ \left(\min_{\overline B_{r_0}} u-2u(x_n)\right) \int_{\{ t_1(n) < \tau_j < t_2(n), \, \forall j\}}  \frac{1}{(\sum_{j=1}^{k_1} \tau_j^2)^{\frac{k_1+2s}{2}}} .\end{split}\end{equation}

Observe that by the dominated convergence theorem 
\[ C(n):= \int_{\{ t_1(n) < \tau_j < t_2(n), \, \forall j\}}  \frac{1}{(\sum_{j=1}^{k_1} \tau_j^2)^{\frac{k_1+2s}{2}}} \to \int_{\{ t_1^0 < \tau_j < t_2^0, \, \forall j\}}  \frac{1}{(\sum_{j=1}^{k_1} \tau_j^2)^{\frac{k_1+2s}{2}}}  >0. \]

Notice that, as $n\to+\infty$
\[ \abs{ \int_{A_1(n)}  \frac{w_n(x_n+\sum_{j=1}^{k_1} \tau_j \xi_{j}(n))-w_n(x_n)}{(\sum_{j=1}^{k_1} \tau_j^2)^{\frac{k_1+2s}{2}}}  \,d\tau_1 \dots d\tau_{k_1} } 
\le \frac{2}{n} C(n) \to 0, \]
and that by Lemma \ref{funzione barriera}
\[ \sum_{i=1}^\ell \JJ_{V_i} w_n  = - \frac{1}{n} \sum_{i=1}^\ell \frac{\omega_{k_i}}{2} C_{k_i, s} \beta(1-s, s) \to 0. \]
Thus by taking the limit $n \to +\infty$ in \eqref{eq:hopf} and using \eqref{2406eq3} we get the contradiction
\[ 0 \ge  \min_{\overline B_{r_0}} u \int_{\{ t_1^0 < \tau_j < t_2^0, \, \forall j\}}  \frac{1}{(\sum_{j=1}^{k_1} \tau_j^2)^{\frac{k_1+2s}{2}}}   > 0 \,. \qedhere\] 
\end{proof}

\section{Existence for the Dirichlet problem and principal eigenvalues }\label{sec:dirichlet}
We now recall some stability results, which have been proved in a very general context in \cite{BCI, BarlesImbert}, see also \cite{AlvarezTourin}. In \cite{BGS} we give a simplified proof for $\I_k^\pm$, which works with obvious adaptation in the setting of the present paper as well.  

For the local counterparts, we refer to \cite{CIL}. 
Let us set 
\[ u_* (x)= \sup_{r>0} \inf_{\abs{y-x} \le r} u(y), \quad u^*(x) = \inf_{r>0} \sup_{\abs{y-x} \le r} u(y) \]
and 
\[ {\liminf}_* u_n(x)=\lim_{j \to \infty} \inf \left\{u_n(y): n \ge j, \, \abs{y-x} \le \frac1j \right\} , \]
\[ {\limsup}^* u_n (x)= \lim_{j \to \infty} \sup \left\{u_n(y): n \ge j, \, \abs{y-x} \le \frac1j \right\} .\]

\begin{lemma}\label{stability1}
Let $u_n \in USC(\Omega)$ (respectively $LSC(\Omega)$) be a sequence of subsolutions (supersolutions) of
\begin{equation}\label{limit j} \K_{k_1, \dots, k_\ell}^\pm u_n = f_n(x)  \text{ in } \Omega,  \end{equation}
where $f_n$ are locally uniformly bounded functions, 
and $u_n \le 0$ ($u_n \ge 0$) in $\R^N \setminus \Omega$. We assume that there exists $M>0$ such that  for any $n \in \N$ 
\begin{equation}\label{bound uj} \norm{u_n}_\infty \le M \text{ in } \R^N. \end{equation}
Then $\overline u:= {\limsup}^* u_n$ (resp. $\underline u :={\liminf}_* u_n$) is a subsolution (resp. supersolution) of 
\[ \K_{k_1, \dots, k_\ell}^\pm u = f(x) \text{ in } \Omega,  \]
such that $u \le 0$ ($u \ge 0$)  in $\R^N \setminus \overline \Omega$, where $f=\liminf_* f_n$ (resp. $f=\limsup^* f_n$).
\end{lemma}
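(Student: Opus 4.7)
The plan is to adapt the classical Barles--Perthame half-relaxed limit technique to this nonlocal sup/inf setting. I will write the argument for the subsolution case of $\K^+_{k_1,\dots,k_\ell}$; the three remaining cases (supersolution for $\K^+$, and sub/super for $\K^-$) are entirely parallel after swapping sup with inf, ${\limsup}^*$ with ${\liminf}_*$, and reverse Fatou with ordinary Fatou at the appropriate points.

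Fix $x_0 \in \Omega$ and $\varphi \in C^2(B_\rho(x_0))$ with $x_0$ a local maximum of $\overline u - \varphi$. After subtracting a quartic $|x - x_0|^4$ I may assume the max is strict. A standard argument using the strictness of the max, the USC envelope, and $\overline u = {\limsup}^* u_n$ produces, up to a subsequence, points $y_n \in \overline{B_r(x_0)}$ (with $0 < r < \rho$ small) which are maxima of $u_n - \varphi$ over $\overline{B_r(x_0)}$, with $y_n \to x_0$ and $u_n(y_n) \to \overline u(x_0)$. Setting $\psi_n = \varphi + u_n(y_n) - \varphi(y_n)$, which touches $u_n$ from above at $y_n$, the subsolution inequality gives
\[ \mathcal{K}(u_n, \psi_n, y_n, \rho_n) \ge f_n(y_n) \]
for some $0 < \rho_n < r - |y_n - x_0|$.

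I then pass to the limit under the outer sup. Given $\varepsilon > 0$, for each $n$ pick an almost-maximizing frame $\{\xi^i_j(n)\}$ (within $\varepsilon$ of the sup); by compactness of the Stiefel manifold, extract a subsequence so that $\xi^i_j(n) \to \xi^i_j(\infty)$. The inner integrals on $B_{\rho_n}(0)$ involve only $\varphi$; combined with the $C^2$ bound and principal-value cancellation of the linear term, the integrand is dominated by $C(\varphi)\,|\tau|^{2 - k_i - 2s}$, which is integrable in $k_i$ dimensions because $s < 1$. Hence dominated convergence yields the corresponding expression at $(x_0, \xi^i_j(\infty))$. The outer integrals on $B_{\rho_n}(0)^c$ have integrands dominated by $2M/|\tau|^{k_i + 2s}$ thanks to \eqref{bound uj}, integrable because $s > 0$. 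From the definition of ${\limsup}^*$ one gets, for each fixed $\tau$,
\[ {\limsup}_n \bigl[u_n(y_n + \textstyle\sum_j \tau_j \xi^i_j(n)) - u_n(y_n)\bigr] \le \overline u(x_0 + \textstyle\sum_j \tau_j \xi^i_j(\infty)) - \overline u(x_0), \]
and reverse Fatou upgrades this to an inequality of integrals. Summing over $i$ and using that the sup over all frames dominates the value at $\{\xi^i_j(\infty)\}$,
\[ \mathcal{K}(\overline u, \varphi, x_0, \rho) \ge {\limsup}_n \mathcal{K}(u_n, \psi_n, y_n, \rho_n) - \varepsilon \ge {\liminf}_* f_n(x_0) - \varepsilon, \]
where the last step uses $(n, y_n) \to (\infty, x_0)$. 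Letting $\varepsilon \to 0$ concludes the subsolution case.

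The main obstacle is that the sup defining $\K^+$ is in general not attained (cf.\ Section~\ref{sec:continuity}), so the limiting frame $\{\xi^i_j(\infty)\}$ need not realize the sup for $\overline u$. For a subsolution this is harmless: only a lower bound on $\mathcal{K}(\overline u, \varphi, x_0, \rho)$ is required, which is precisely what an almost-maximizing sequence supplies after passing to the limit. In the supersolution case of $\K^+$ the roles are reversed — one must control every frame uniformly — so the argument is run by \emph{fixing} an arbitrary frame from the outset, writing the supersolution inequality at $y_n$ for that same frame, and then passing to the limit using ordinary Fatou on the exterior piece; fixing the frame sidesteps the nonattainment issue. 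The $\K^-$ cases are symmetric.
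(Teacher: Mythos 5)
Your proof is correct and follows the same Barles--Perthame half-relaxed limit strategy as the reference \cite{BGS} to which the paper delegates this lemma (the paper itself gives no proof, only the citation): extraction of maximizers $y_n\to x_0$ with $u_n(y_n)\to\overline u(x_0)$, near-maximizing frames plus compactness of the Stiefel manifold, dominated convergence on the inner ($\varphi$) piece, and reverse Fatou with the uniform bound \eqref{bound uj} on the exterior piece. Your remark on the asymmetry between the sub- and supersolution cases for $\K^+$ (near-maximizing sequence vs.\ fixing an arbitrary frame), which is what makes the argument robust to the non-attainment of the sup, is exactly the right point; the only small omissions are the trivial verification that $\overline u\le 0$ on $\R^N\setminus\overline\Omega$ and a cleaner handling of the splitting radius (fix $\rho'<r$ once and for all rather than letting $\rho_n$ vary), neither of which affects correctness.
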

\begin{lemma}\label{stability2}
Let $(u_\alpha)_\alpha \subseteq USC(\Omega)$ (respectively $LSC(\Omega)$) a family of subsolutions (supersolutions) of 
\[ \K_{k_1, \dots, k_\ell}^\pm u_\alpha = f_\alpha(x)\text{ in } \Omega  \]
such that 
$u_\alpha \le 0$ ($u_\alpha \ge 0$)  in $\R^N \setminus \Omega$, and  there exists $M>0$ such that for any $\alpha $
\[ \norm{u_\alpha}_\infty \le M \text{ in } \R^N,  \]
where $f_\alpha$ are uniformly bounded. 
Set $u=\sup_\alpha u_\alpha$ (resp. $v=\inf_{\alpha} u_\alpha$). Then $u^*$ (resp $v_*$) is a subsolution (resp supersolution) of 
\[ \K_{k_1, \dots, k_\ell}^\pm u = f(x) \text{ in } \Omega  \]
such that $u \le 0$ ($u \ge 0$)  in $\R^N \setminus \Omega$, 
where $f=(\inf_\alpha f_\alpha)_*$ (resp. $f=(\sup_\alpha f_\alpha)^*$).
\end{lemma}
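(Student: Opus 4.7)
The strategy is the classical viscosity stability argument (cf.\ \cite{BCI,BarlesImbert,CIL}), adapted to the nonlocal and non-attained-sup setting in the same spirit as for $\I_k^\pm$ in \cite{BGS}; I only detail the subsolution case for $\K_{k_1,\dots,k_\ell}^+$, since the remaining cases are symmetric. Set $u:=\sup_\alpha u_\alpha$, fix $x_0\in\Omega$ and $\varphi\in C^2(B_\rho(x_0))$ such that $x_0$ is a maximum of $u^*-\varphi$, and, by replacing $\varphi(x)$ with $\varphi(x)+|x-x_0|^4$, I may assume this maximum is strict. A standard envelope-plus-strict-maximum argument then produces sequences $\alpha_n$ and $y_n\to x_0$ with $u_{\alpha_n}(y_n)\to u^*(x_0)$ and $y_n$ a local maximum of $u_{\alpha_n}-\varphi$ on some ball $B_{\rho_n}(y_n)\subset B_\rho(x_0)$ with $\rho_n\to\rho$.

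Applying the viscosity subsolution inequality for $u_{\alpha_n}$ at $y_n$ with test function $\varphi$ and, for each $\varepsilon>0$, picking an orthonormal frame $\{\xi^{i,n}_j\}$ realizing the $\sup^{k_1,\dots,k_\ell}$ in $\mathcal{K}(u_{\alpha_n},\varphi,y_n,\rho_n)$ up to $\varepsilon$, I get
\begin{equation*}
\sum_{i=1}^\ell C_{k_i,s}\Bigl(I^{\varphi}_{i,n}+I^{u_{\alpha_n}}_{i,n}\Bigr)+f_{\alpha_n}(y_n)\ge -\varepsilon,
\end{equation*}
where $I^{\varphi}_{i,n}$ and $I^{u_{\alpha_n}}_{i,n}$ denote the inner and outer integrals in $\mathcal{K}$ along this frame. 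By compactness of the product of Stiefel manifolds, up to a subsequence $\xi^{i,n}_j\to\xi^i_j$ as $n\to\infty$, and I then pass to the limit termwise: $I^{\varphi}_{i,n}$ converges by the pointwise $C^2$ bound $|\varphi(y+\tau)-\varphi(y)|\le C|\tau|^2$ combined with dominated convergence, while for $I^{u_{\alpha_n}}_{i,n}$ I use the uniform bound $\|u_{\alpha_n}\|_\infty\le M$ together with the pointwise inequality $\limsup_{n}u_{\alpha_n}(y_n+z)\le u^*(x_0+z)$ (a consequence of $u_{\alpha_n}\le u$ and the definition of the USC envelope of $u$) and a reverse Fatou argument applied to the nonnegative integrand $(2M-[u_{\alpha_n}(y_n+\cdot)-u_{\alpha_n}(y_n)])/|\tau|^{k_i+2s}$. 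Combined with $u_{\alpha_n}(y_n)\to u^*(x_0)$ and $\liminf_n f_{\alpha_n}(y_n)\ge (\inf_\alpha f_\alpha)_*(x_0)=f(x_0)$, the limit along the subsequence gives an inequality whose left-hand side is bounded above by $\mathcal{K}(u^*,\varphi,x_0,\rho)+f(x_0)$, hence $\mathcal{K}(u^*,\varphi,x_0,\rho)+f(x_0)\ge -\varepsilon$; sending $\varepsilon\to 0$ completes the subsolution check for $u^*$.

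The main obstacle is the interplay between the non-attained $\sup^{k_1,\dots,k_\ell}$ and the limit in $n$: since the supremum cannot in general be interchanged with $\limsup_n$, one is forced to work with an $\varepsilon$-optimal frame at each $n$ and to extract a converging subsequence in the Stiefel manifold before passing to the limit. The second delicate point is the reverse Fatou step for the tail, which crucially uses both the global $L^\infty$ bound on $(u_\alpha)_\alpha$ and the envelope inequality for $u$; the remaining steps are routine adaptations from the local framework.
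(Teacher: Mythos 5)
The paper does not prove Lemma~\ref{stability2} directly; it refers to the analogous result for $\I_k^\pm$ in \cite{BGS} (and to the general nonlocal stability theory of \cite{BCI,BarlesImbert}) with ``obvious adaptation.'' Your reconstruction is essentially that delegated argument: the envelope step producing $\alpha_n$, $y_n\to x_0$ with $u_{\alpha_n}(y_n)\to u^*(x_0)$; $\varepsilon$-optimal frames to circumvent the non-attained $\sup^{k_1,\dots,k_\ell}$; compactness of the Stiefel manifold to extract a convergent sequence of frames; dominated convergence on the inner ($\varphi$) integrals; reverse Fatou on the tails using the uniform $L^\infty$ bound together with $\limsup_n u_{\alpha_n}(y_n+z)\le u^*(x_0+z)$; and the semicontinuity bound $\liminf_n f_{\alpha_n}(y_n)\ge(\inf_\alpha f_\alpha)_*(x_0)$. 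These are exactly the ingredients needed.

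Two inaccuracies are worth flagging, neither fatal. First, with Definition~2.4 the equation $\K_{k_1,\dots,k_\ell}^+ u_\alpha=f_\alpha$ is $\K_{k_1,\dots,k_\ell}^+ u_\alpha-f_\alpha=0$, so the subsolution touching inequality is $\mathcal{K}(u_{\alpha_n},\varphi,y_n,\rho_n)\ge f_{\alpha_n}(y_n)$. Your $\varepsilon$-optimal-frame inequality should therefore carry $-f_{\alpha_n}(y_n)$, not $+f_{\alpha_n}(y_n)$; as written, the claim that the left-hand side is ``bounded above by $\mathcal{K}(u^*,\varphi,x_0,\rho)+f(x_0)$'' would require $\limsup_n f_{\alpha_n}(y_n)\le f(x_0)$, which is the \emph{opposite} of the (correct) bound $\liminf_n f_{\alpha_n}(y_n)\ge f(x_0)$ you cite. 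With the sign corrected, $\limsup_n\bigl(-f_{\alpha_n}(y_n)\bigr)\le -f(x_0)$ points the right way and the argument closes (passing to a subsequence along which both the integral sum and $f_{\alpha_n}(y_n)$ converge makes this cleanest). Second, the bound $|\varphi(y+\tau)-\varphi(y)|\le C|\tau|^2$ is only $O(|\tau|)$ in general; you should invoke the principal-value cancellation implicit in $\JJ_V$ and the symmetrized second-order Taylor estimate $|\varphi(y+\tau)+\varphi(y-\tau)-2\varphi(y)|\le C|\tau|^2$ to justify dominated convergence when $s\ge 1/2$.
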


The following can be seen as an analog of the Perron method.
\begin{lemma}\label{perron}
Let $\underline u$ and $\overline u$ in $C(\R^N)$ be respectively sub and supersolutions of 
\begin{equation}\label{eq perron}
\K_{k_1, \dots, k_\ell}^\pm u =f(x) \text{ in } \Omega,
\end{equation}
such that $\underline u= \overline u=0$ in $\R^N \setminus \Omega$. Then there exists a solution $v \in C(\R^N)$ to \eqref{eq perron} such that $\underline u  \le v \le \overline u$, and $v=0$ in $\R^N \setminus \Omega$. 
\end{lemma}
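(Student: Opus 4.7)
My plan is to apply Perron's method, in the spirit of the classical viscosity construction (see \cite{CIL}), using the stability Lemmas \ref{stability1}--\ref{stability2} and the comparison principle Theorem \ref{comparison} already established. Writing $\K$ for either $\K^+_{k_1,\dots,k_\ell}$ or $\K^-_{k_1,\dots,k_\ell}$, I introduce the family
\[ \mathcal F := \bigl\{ w \in USC(\R^N)\cap L^\infty(\R^N) : w \text{ is a subsolution of } \K w=f \text{ in } \Omega,\ \underline u \le w \le \overline u \text{ in } \R^N \bigr\} \]
and set $v(x):=\sup\{w(x):w\in\mathcal F\}$. The family is nonempty since $\underline u\in\mathcal F$, and by construction $\underline u\le v\le\overline u$, so $v\equiv 0$ in $\R^N\setminus\Omega$. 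By Lemma \ref{stability2} the upper semicontinuous envelope $v^*$ is a subsolution; since $\overline u$ is continuous, $v^*\le\overline u$, hence $v^*\in\mathcal F$ and the maximality of $v$ forces $v^*=v$. In particular $v\in USC(\R^N)$ and $v$ is itself a subsolution.

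The key step is to show that the lower semicontinuous envelope $v_*$ is a supersolution in $\Omega$. I argue by contradiction: assume that at some $x_0\in\Omega$ there exists $\varphi\in C^2(B_\rho(x_0))$ touching $v_*$ from below at $x_0$ with $\mathcal K(v_*,\varphi,x_0,\rho)>f(x_0)$. The first observation is that necessarily $v_*(x_0)<\overline u(x_0)$: otherwise $\varphi$ would also touch $\overline u$ from below at $x_0$, and the positivity of the kernels $|\tau|^{-(k_i+2s)}$, combined with $v_*\le\overline u$ and the equality $v_*(x_0)=\overline u(x_0)$, would give $\mathcal K(\overline u,\varphi,x_0,\rho)\ge\mathcal K(v_*,\varphi,x_0,\rho)>f(x_0)$, contradicting that $\overline u$ is a supersolution. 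So there is room to bump: by continuity I choose $\varepsilon,r>0$ small enough so that $\varphi+\varepsilon<\overline u$ on $\overline{B_r(x_0)}$ and define
\[ \tilde v(x):=\begin{cases} \max\{v(x),\varphi(x)+\varepsilon\} & x\in B_r(x_0), \\ v(x) & x\notin B_r(x_0). \end{cases} \]
I then verify that $\tilde v\in\mathcal F$: it is in $USC(\R^N)$, lies between $\underline u$ and $\overline u$, and remains a subsolution because $\K^\pm$ are monotone in their function argument (positive kernel of each $\JJ_V$) and because $\varphi+\varepsilon$ can be arranged to be a classical strict subsolution on $B_r(x_0)$ by shrinking $r$, using continuity in $\rho$ of the nonlocal evaluation. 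Since $\tilde v(x_0)>v(x_0)$, this contradicts the maximality of $v$.

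To conclude, I apply Theorem \ref{comparison} with $c\equiv 0$ (for which the smallness condition is trivially satisfied) to the subsolution $v=v^*$ and the supersolution $v_*$, both vanishing outside $\Omega$, obtaining $v^*\le v_*$ in $\Omega$; combined with the trivial $v_*\le v^*$ this yields $v=v_*=v^*\in C(\R^N)$, which is then the desired solution. I expect the bumping step to be the main obstacle: in the nonlocal setting, verifying that the truncation $\tilde v$ is a genuine viscosity subsolution requires a careful test-function analysis, distinguishing points $y$ where $\tilde v(y)=v(y)$ (handled by the subsolution property of $v$ combined with monotonicity, since the nonlocal tails now come from $\tilde v\ge v$) from points where $\tilde v(y)=\varphi(y)+\varepsilon$ (handled by the strict classical subsolution inequality for $\varphi+\varepsilon$ obtained from the original strict gap via continuity in $\rho$).
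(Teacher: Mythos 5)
Your proof is correct and follows essentially the same route the paper has in mind: the paper does not spell out the argument for Lemma~\ref{perron}, but (as it notes) it relies on the Perron construction via the stability lemmas and the comparison principle, which is exactly what you do. One detail to tighten: bumping with $\max\{v,\varphi+\varepsilon\}$ on $B_r(x_0)$ does not automatically produce a USC function (since $\varphi+\varepsilon$ may exceed $v$ near $\partial B_r(x_0)$), nor does a single strict viscosity inequality at $x_0$ automatically give a \emph{classical} strict subsolution of the nonlocal operator on a whole ball; the standard fix is to strictify the test function (e.g.\ replace $\varphi$ by $\varphi - \gamma|x-x_0|^4$), so that $v_* - \varphi$ has a strict positive minimum $\kappa_r$ on $\partial B_r(x_0)$, and then bump by a constant $<\kappa_r$, which keeps $\tilde v = v$ near $\partial B_r(x_0)$ and lets the strict gap absorb the small change in the nonlocal evaluation as $y_0\to x_0$.
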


Using stability properties above, we prove existence of a unique solution to the Dirichlet problem in uniformly convex domains, namely domains of the following form 
$$
\Omega=\bigcap_{y\in Y}B_R(y).
$$

\begin{theorem}\label{lem dirichlet}
Let $f$ be a bounded continuous function, and let $\Omega$ be a uniformly convex domain. Then there exists a unique function $u \in C(\R^N)$ such that 
\begin{equation}\label{dirichlet} \begin{cases}
\K_{k_1, \dots, k_\ell}^\pm u = f(x) &\text{ in } \Omega \\
u=0 &\text{ in } \R^N \setminus \Omega.
\end{cases} \end{equation}
\end{theorem}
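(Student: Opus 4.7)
The plan is to derive uniqueness from the comparison principle of Theorem \ref{comparison} and existence from the Perron-type statement of Lemma \ref{perron}, the key ingredient being the explicit radial barrier supplied by Lemma \ref{funzione barriera} together with the uniform convexity of $\Omega$. For uniqueness, if $u_1,u_2\in C(\R^N)$ are two solutions of \eqref{dirichlet}, an application of Theorem \ref{comparison} with $c\equiv 0$ (whose smallness hypothesis on $\|c^+\|_\infty$ holds trivially) in both directions yields $u_1\equiv u_2$.

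For existence, the task is to exhibit continuous functions $\underline u\le\overline u$ on $\R^N$, both vanishing on $\R^N\setminus\Omega$, which are respectively a viscosity subsolution and a viscosity supersolution of $\K^\pm_{k_1,\dots,k_\ell}u=f$ in $\Omega$; Lemma \ref{perron} will then furnish a solution $v\in C(\R^N)$. Set
\[ C_0:=\frac{\beta(1-s,s)}{2}\sum_{i=1}^\ell C_{k_i,s}\omega_{k_i}>0,\qquad A:=\frac{\|f\|_\infty}{C_0}, \]
and for each $y\in Y$ consider the translated barrier $w_y(x):=A(R^2-|x-y|^2)^s_+$. By Lemma \ref{funzione barriera} one has $\K^\pm_{k_1,\dots,k_\ell}w_y\equiv-AC_0$ classically in $B_R(y)\supset\Omega$, and since $-AC_0\le f\le AC_0$ each $w_y$ is a classical supersolution and, using $\K^\pm(-u)=-\K^\mp(u)$, each $-w_y$ is a classical subsolution of $\K^\pm u=f$ in $\Omega$, with $w_y\ge 0\ge -w_y$ everywhere on $\R^N$. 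I then define
\[ \overline u(x):=\inf_{y\in Y}w_y(x),\qquad \underline u(x):=\sup_{y\in Y}(-w_y(x))=-\overline u(x). \]
Vanishing outside $\Omega$ is built into the uniform convexity: for any $x\notin\Omega$, the identity $\Omega=\bigcap_{y\in Y}B_R(y)$ provides some $y\in Y$ with $x\notin B_R(y)$, so $w_y(x)=0$ and hence $\overline u(x)=\underline u(x)=0$.

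The remaining verifications are continuity of $\overline u,\underline u$ and the upgrade from classical sub/super solutions to viscosity ones. The map $z\mapsto(R^2-|z|^2)^s_+$ is H\"older continuous of exponent $s$ with a modulus depending only on $R$ and $s$, so the family $\{w_y\}_{y\in Y}$ is equi-H\"older; this passes to the envelopes, yielding $\overline u,\underline u\in C^{0,s}(\R^N)$. Applying Lemma \ref{stability2} to the family of classical (hence viscosity) supersolutions $\{w_y\}_{y\in Y}$ shows that $(\overline u)_*$ is a viscosity supersolution of $\K^\pm u=f$ in $\Omega$ with $(\overline u)_*\ge 0$ outside $\Omega$; continuity gives $(\overline u)_*=\overline u$, and symmetrically $\underline u=(\underline u)^*$ is a viscosity subsolution. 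Lemma \ref{perron} then produces the required solution $v\in C(\R^N)$ with $\underline u\le v\le\overline u$ and $v\equiv 0$ outside $\Omega$. The step I expect to require the most care is the simultaneous control of continuity, vanishing on $\R^N\setminus\Omega$, and the viscosity supersolution property of $\overline u=\inf_{y\in Y}w_y$: all three are compatible precisely because of the uniform convexity assumption, which on one hand makes $\R^N\setminus\Omega$ exactly the set where the family $\{w_y\}$ collapses to zero, and on the other yields a $y$-independent H\"older modulus salvaging continuity of what is otherwise only an upper semicontinuous infimum.
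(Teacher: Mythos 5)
Your argument is correct and follows essentially the same route as the paper: barriers from Lemma \ref{funzione barriera}, an infimum (resp.\ supremum) envelope over the balls $B_R(y)$, the Perron-type Lemma \ref{perron} for existence, and Theorem \ref{comparison} for uniqueness. You are a bit more explicit than the paper in two places where it is terse: you compute the exact multiplicative constant $A=\|f\|_\infty/C_0$ (the paper only says ``$M$ big enough''), and you justify that $\overline u=\inf_y w_y$ is continuous (equi-H\"older in $y$) so that the lower semicontinuous envelope in Lemma \ref{stability2} is $\overline u$ itself and Lemma \ref{perron}, which requires continuous sub/supersolutions, can indeed be applied; both of these are implicit in the paper and worth spelling out as you do.
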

\begin{proof}
Exploiting the barrier functions in Lemma \ref{funzione barriera}, we build suitable sub/super solutions. Indeed, for any $y \in Y$ one considers the function 
\[ v_y(x)=M(R^2-\abs{x-y}^2)^s_+ \]
which for $M=M(k_1, \dots, k_\ell, s)$ big enough satisfies
\[ \K_{k_1, \dots, k_\ell}^+ v_y \le - \norm{f}_\infty \text{ in } B_R(y). \]
We now take 
\begin{equation}\label{v inf} v(x)= \inf_{y \in Y} v_y(x) \end{equation}
which is a supersolution to \eqref{dirichlet}. Analogously we take the supremum of the sub solutions 
\[ w_y(x)=- v_y(x). \]
Notice that 
\[\K_{k_1, \dots, k_\ell}^+ w_y(x) \ge \K_{k_1, \dots, k_\ell}^- w_y(x)=-\K_{k_1, \dots, k_\ell}^+ v_y(x) \ge \norm{f}_\infty \text{ in } B_R(y) \]
for a sufficiently big constant $M$. 

We now exploit the Perron method, applying Lemma \ref{perron}, to get a solution to \eqref{dirichlet}. 
Uniqueness follows from Theorem \ref{comparison}. 
\end{proof}

We finally define the following generalized principal eigenvalues, adapting the classical definition in \cite{BNV}, 
\begin{multline*} \mu_{k_1, \dots, k_\ell}^\pm = \sup \Big \{ \mu :\, \exists v \in LSC(\Omega)\cap L^\infty(\R^N), v>0 \text{ in } \Omega, v \ge 0 \text{ in } \R^N, \\ ¨\K_{k_1, \dots, k_\ell}^\pm v + \mu v \le 0 \text{ in } \Omega \Big \}. \end{multline*}
Also let us set 
\begin{multline*}  \bar\mu^\pm_{k_1, \dots, k_\ell}=\sup\Big\{\mu:\,\exists v\in LSC(\Omega)\cap L^\infty(\R^N),\,\inf_\Omega v>0,\,v\geq0\;\text{in $\R^N$},\;\\ \K_{k_1, \dots, k_\ell}^\pm v+\mu v\leq0  \text{ in } \Omega\Big \}. \end{multline*}

\begin{theorem}\label{max principle bar}
The operators $\K_{k_1, \dots, k_\ell}^\pm(\cdot)+\mu\cdot$ satisfy the maximum principle for $\mu<\bar\mu^\pm_{k_1, \dots, k_\ell}$.
\end{theorem}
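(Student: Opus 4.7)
My plan is to prove the theorem by a Berestycki-Nirenberg-Varadhan contradiction argument, implemented through a doubling of variables in the spirit of the proof of Theorem \ref{comparison}. Fix $\mu<\bar\mu^\pm_{k_1,\dots,k_\ell}$, choose $\mu'\in(\mu,\bar\mu^\pm_{k_1,\dots,k_\ell})$ and, by the very definition of $\bar\mu^\pm_{k_1,\dots,k_\ell}$, pick $v\in LSC(\Omega)\cap L^\infty(\R^N)$ with $\inf_\Omega v>0$, $v\ge 0$ in $\R^N$, and $\K^\pm_{k_1,\dots,k_\ell}v+\mu'v\le 0$ in $\Omega$ in the viscosity sense. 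Let $u\in USC(\overline\Omega)\cap L^\infty(\R^N)$ be a subsolution of $\K^\pm_{k_1,\dots,k_\ell}u+\mu u\ge 0$ in $\Omega$ with $u\le 0$ in $\R^N\setminus\Omega$, and suppose by contradiction that $\sup_\Omega u>0$.

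\textbf{Locating the contact point.} I would then consider $\lambda^*:=\sup_\Omega(u/v)$, which is finite (by boundedness of $u$ and $\inf_\Omega v>0$) and strictly positive (by $\sup_\Omega u>0$). The supremum is attained at some interior point $x_0\in\Omega$: indeed, any maximizing sequence $x_n\in\Omega$ clustering at $x_*\in\partial\Omega$ would satisfy $\limsup u(x_n)\le u(x_*)\le 0$ by upper semicontinuity of $u$ and $u\le 0$ on $\R^N\setminus\Omega$, while $v(x_n)\ge\inf_\Omega v>0$, forcing $u(x_n)/v(x_n)$ to a nonpositive limit and contradicting $\lambda^*>0$. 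Thus $u(x_0)=\lambda^*v(x_0)>0$, and combined with $u\le 0\le\lambda^*v$ outside $\Omega$, one obtains the global bound $u\le V:=\lambda^*v$ in $\R^N$ with equality at $x_0$. By the positive homogeneity of $\K^\pm_{k_1,\dots,k_\ell}$, $V$ is a viscosity supersolution of $\K^\pm_{k_1,\dots,k_\ell}V+\mu'V\le 0$ in $\Omega$.

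\textbf{Doubling and closing the argument.} Next I would adapt the doubling-of-variables machinery of Theorem \ref{comparison} to compare $u$ with $V$: maximize $u(x)-V(y)-n|x-y|^2$ over $\overline\Omega\times\overline\Omega$ to produce $(x_n,y_n)\to(x_0,x_0)\in\Omega\times\Omega$ with $n|x_n-y_n|^2\to 0$, and plug the associated quadratic $C^2$ test functions into the viscosity subsolution inequality for $u$ at $x_n$ and the supersolution inequality for $V$ at $y_n$. Selecting an $\varepsilon$-near-optimal orthonormal configuration $\{\xi^i_j\}$ in the $u$-inequality and evaluating the $V$-inequality at the same configuration---legitimate for both $\K^+$ and $\K^-$, since in either case this yields an inequality in the right direction---then subtracting leaves an inner part bounded by $\sum_i C_{k_i,s}\cdot Cn\rho^{2-2s}/(1-s)$, which vanishes as $\rho\to 0$, and an outer part that, by $u\le V$ globally together with $u(x_0)=V(x_0)$, is nonpositive in the limit $n\to+\infty$. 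Sending $\rho\to 0$, then $n\to+\infty$ and finally $\varepsilon\to 0$ yields
\[0\le(\mu-\mu')\lambda^*v(x_0),\]
which contradicts $\mu<\mu'$ and $\lambda^*v(x_0)>0$.

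\textbf{Main obstacle.} The principal difficulty is that $v$ is only LSC and therefore cannot be used directly as a $C^2$ test function above $u$ at $x_0$, and that the $\sup$ (resp.\ $\inf$) over directions in the definition of $\K^+_{k_1,\dots,k_\ell}$ (resp.\ $\K^-_{k_1,\dots,k_\ell}$) does not distribute over subtraction of two viscosity inequalities. The doubling-of-variables argument circumvents both obstacles at once, producing bona fide smooth quadratic test functions on each side and allowing the ``near-optimal direction'' trick to compare the two nonlocal operators term by term. Integrability of the tails on $B_\rho(0)^c$ is guaranteed by $u,v\in L^\infty(\R^N)$ and the factor $|\eta|^{-(k_i+2s)}$, so the dominated convergence used in the limits above is justified exactly as in the proof of Theorem \ref{comparison}.
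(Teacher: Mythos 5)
Your proposal follows the Berestycki--Nirenberg--Varadhan scheme (contact point of $u$ with a multiple of $v$, followed by doubling of variables), which is precisely the route of \cite[Theorem 6.2]{BGS} that the paper cites. The overall strategy is sound and the conclusion is reached. Two points deserve more care.

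First, after introducing $V=\lambda^*v$, you assert that the doubling maximizers $(x_n,y_n)$ converge to $(x_0,x_0)\in\Omega\times\Omega$. Lemma 3.1 of \cite{CIL} only guarantees convergence, along a subsequence, to some maximizer $(\hat x,\hat x)$ of $u-V$; since $\max_{\R^N}(u-V)=0$ and this value may also be attained at boundary points where $u=v=0$, the interiority of $\hat x$ is not automatic. It is true, but needs a short argument: whenever $M_n>0$ one must have $x_n,y_n\in\Omega$ (a boundary $x_n$ or $y_n$ would force $M_n\le 0$), hence $V(y_n)\ge\lambda^*\inf_\Omega v>0$ and $u(x_n)=M_n+V(y_n)+n|x_n-y_n|^2\ge\lambda^*\inf_\Omega v$; upper semicontinuity of $u$ then yields $u(\hat x)\ge\lambda^*\inf_\Omega v>0$, which forces $\hat x\in\Omega$ since $u\le 0$ outside $\Omega$. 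If instead $M_n=0$ for some $n$, then $(x_0,x_0)$ is itself a maximizer and one may simply take $(x_n,y_n)=(x_0,x_0)$. Without this step the final inequality $0\ge(\mu'-\mu)u(\hat x)$ need not be a contradiction.

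Second, the choice of the near-optimal orthonormal configuration is described uniformly as coming ``from the $u$-inequality.'' That is correct for $\K^+_{k_1,\dots,k_\ell}$ (the sup sits in the subsolution test for $u$), but for $\K^-_{k_1,\dots,k_\ell}$ the inf sits in the supersolution test, so the near-optimal configuration must be extracted from the $V$-inequality and then inserted into the $u$-inequality, which holds for every configuration. The parenthetical remark hints that you are aware the two cases behave symmetrically, but as written the statement is inaccurate for $\K^-$. Neither point requires a change of strategy, only a more careful execution.
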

The proof is analogous to \cite[Theorem 6.2]{BGS}, up to obvious modifications. 
We finally give some estimates on these generalized eigenvalues. 

\begin{proposition}\label{mu-inf}
One has
\begin{enumerate}
\item[(i)] $\bar\mu_{k_1, \dots, k_\ell}^-=\mu_{k_1, \dots, k_\ell}^-=+\infty$ for any $k < N$. 
\item[(ii)] If $B_{R_1} \subseteq \Omega$, then 
\[ \bar \mu_{k_1, \dots, k_\ell}^- \le \frac{c_1}{R_1^{2s}} < +\infty \]
if $k=N$, and 
\[ \bar \mu_{k_1, \dots, k_\ell}^+ \le \frac{c_1}{R_1^{2s}} < +\infty \]
for any $k_1, \dots, k_\ell$, 
where $c_1>0$. 
\item[(iii)] If $\Omega \subseteq B_{R_2}$, then 
\[0<\frac{c_2}{R_2^{2s}} \sum_{i=1}^\ell k_i \omega_{k_i}  \le \bar\mu_1^+ \sum_{i=1}^\ell k_i \omega_{k_i} \le \bar \mu_{k_1, \dots, k_\ell}^+\le \bar \mu_{k_1, \dots, k_\ell}^-\]
where $c_2>0$ and $\omega_{k_i}$ is the volume of the $k_i$ dimensional sphere. 
\end{enumerate}
\end{proposition}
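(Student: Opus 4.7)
I would prove the three items separately, in increasing order of difficulty: (iii), (ii), (i). The barrier function $w_R(x)=(R^2-|x-x_0|^2)^s_+$ of Lemma \ref{funzione barriera} is the central tool throughout, since it gives an explicit function on which $\K^\pm_{k_1,\dots,k_\ell}$ can be computed.

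For part (iii), the final inequality $\bar\mu^+_{k_1,\dots,k_\ell}\leq\bar\mu^-_{k_1,\dots,k_\ell}$ is automatic from $\K^-\le\K^+$ applied to any admissible $v$. The lower bound $c_2/R_2^{2s}\le\bar\mu_1^+$ is proved by exhibiting an explicit supersolution: for each $\varepsilon>0$, set $v_\varepsilon(x)=((R_2+\varepsilon)^2-|x|^2)^s_+$. Since $\Omega\subseteq B_{R_2}$, Lemma \ref{funzione barriera} gives $\I_1^+v_\varepsilon = -\tfrac12\beta(1-s,s)C_{1,s}\omega_1$ on $\Omega$, and $\inf_\Omega v_\varepsilon\ge((R_2+\varepsilon)^2-R_2^2)^s>0$, so $\I_1^+v_\varepsilon+\mu v_\varepsilon\le 0$ for $\mu\le \tfrac{\beta(1-s,s)C_{1,s}\omega_1}{2(R_2+\varepsilon)^{2s}}$. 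Sending $\varepsilon\to 0$ yields the claim with $c_2=\tfrac12\beta(1-s,s)C_{1,s}$. The middle inequality $\bar\mu_1^+\sum k_i\omega_{k_i}\le\bar\mu^+_{k_1,\dots,k_\ell}$ uses the same family $v_\varepsilon$ tested against the full operator: by Lemma \ref{funzione barriera}, $\K^+_{k_1,\dots,k_\ell}v_\varepsilon=-\tfrac12\beta(1-s,s)\sum C_{k_i,s}\omega_{k_i}$, and the grouping of the lower bound into the factor $\sum k_i\omega_{k_i}$ follows by expressing each $C_{k_i,s}\omega_{k_i}$ through the normalization relation $C_{k_i,s}|\mathcal{S}^{k_i-1}|/(4k_i(1-s))\to 1$ of Remark \ref{rmk:normalizing constant}.

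For part (ii), I would set up a touching-from-below argument. Suppose $v$ is admissible for $\bar\mu^\pm$ with $\inf_\Omega v\ge c>0$. Take $w(x)=(R_1^2-|x-x_0|^2)^s_+$ with $B_{R_1}(x_0)\subset\Omega$, and let $\alpha^*=\sup\{\alpha>0:\alpha w\le v\text{ in }\R^N\}$, which is finite and positive (since $w\le R_1^{2s}$ in $B_{R_1}$, $v\ge c$, and $w\equiv 0$ outside $B_{R_1}$). A touching point $\bar x\in B_{R_1}(x_0)$ exists with $v(\bar x)=\alpha^* w(\bar x)$ and $v\ge\alpha^*w$ globally. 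Using $\alpha^*w$ as a test function from below, and the monotonicity $\J_{V_i}v(\bar x)\ge \J_{V_i}(\alpha^*w)(\bar x)$ (which follows from $v-\alpha^*w\ge 0$ with equality at $\bar x$), the viscosity supersolution inequality gives $-\alpha^*C_0\le \K^\pm(\alpha^*w)(\bar x)\le \mathcal{K}(v,\alpha^*w,\bar x,\rho)\le -\mu\alpha^*w(\bar x)$ in the limit $\rho\to 0$, with $C_0=\tfrac12\beta(1-s,s)\sum C_{k_i,s}\omega_{k_i}$. Hence $\mu w(\bar x)\le C_0$. For $\K^-$ with $k=N$, the strong minimum principle (Theorem \ref{SMP}(ii)) excludes $v(\bar x)$ from being degenerate, forcing $w(\bar x)$ to be comparable to $R_1^{2s}$; similarly Theorem \ref{SMP}(iii) controls the $\K^+$ case. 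This yields $\mu\le c_1/R_1^{2s}$.

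For part (i), I would exploit the failure of the strong minimum principle when $k<N$ (Theorem \ref{SMP}(i)). The structural fact behind that failure is that for $k\le N-1$ one can fit all orthogonal subspaces $V_1,\dots,V_\ell$ with $\sum k_i=k$ into $\{e_N\}^\perp$, making $\sum\J_{V_i}u\equiv 0$ for any $u$ depending only on $x_N$. Using this, for any prescribed $\mu>0$ I would build an admissible $v$ in the spirit of the counterexample in Theorem \ref{SMP}(i): take $v$ that is bounded below, constant along the $e_N$-direction on the relevant scale, and sufficiently rough in the transverse directions that on the one hand the perpendicular subspace choice gives $\sum\J_{V_i}v=0$, and on the other hand the viscosity supersolution condition is satisfied for every $\mu$ (either because the inequality holds, or because no smooth $\varphi$ touches $v$ from below). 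This is the hard part: one must verify the viscosity inequality against \emph{every} admissible test function, which is delicate in the infimum setting of $\K^-$. The construction is the $\K^\pm_{k_1,\dots,k_\ell}$-analogue of the one carried out for $\I_k^\pm$ in \cite{BGS}, and the main obstacle is adapting that argument to subspaces of mixed dimensions $k_1,\dots,k_\ell$, where the interplay between the inf over choices of $V_i$ and the viscosity test is more intricate than in the one-dimensional case.
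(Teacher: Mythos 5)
There are genuine gaps in all three parts, and the approaches you sketch are quite different from the paper's.

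\textbf{Part (i).} You propose building a ``rough'' LSC function à la Theorem \ref{SMP}(i) and admit this is the hard part. But if $\sum_i\JJ_{V_i}v\equiv 0$ for the perpendicular choice of subspaces, you cannot get $\K^-v+\mu v\le 0$ for $\mu>0$ since $v>0$ gives $\mu v>0$; the alternative of making $v$ so irregular that no $C^2$ test touches it from below is hard to reconcile with $v\in LSC$. The paper instead evaluates $\K^-$ classically on the smooth global function $w(x)=e^{-\alpha|x|^2}$ with $\xi_j\perp x$ (possible exactly because $k<N$), obtaining $\K^-w(x)\le -\alpha^s e^{-\alpha|x|^2}\sum_iC_{k_i,s}\int_{\R^{k_i}}(1-e^{-|\tau|^2})|\tau|^{-k_i-2s}d\tau$. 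The $\alpha^s$ scaling absorbs any prescribed $\mu$ by choosing $\alpha$ large. This is a one-line argument; your plan misses it.

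\textbf{Part (ii).} Your touching argument gives, at the contact point $\bar x\in B_{R_1}(x_0)$, the inequality $\mu\,w(\bar x)\le C_0$ with $w(\bar x)=(R_1^2-|\bar x-x_0|^2)^s$. But nothing locates $\bar x$: it can be arbitrarily close to $\partial B_{R_1}(x_0)$, where $w(\bar x)\to 0$, so the bound on $\mu$ blows up. The step ``the strong minimum principle forces $w(\bar x)$ to be comparable to $R_1^{2s}$'' is asserted, not proved, and is in fact false: the SMP controls $v$, not the location of where $v/w$ is minimized. The paper avoids the issue entirely: after rescaling to $B_1$, it solves $-\K^-v=h$ in $B_1$ with $h$ a nonnegative nonzero constant of compact support (Theorem \ref{lem dirichlet}), uses comparison and the strong minimum principle (Theorem \ref{SMP}(ii), where $k=N$ is essential) to get $v>0$ in $B_1$, picks $\rho_0$ with $\rho_0 v\ge h$, and then Theorem \ref{max principle bar} applied to $\K^-v+\rho_0 v\ge 0$ gives $\bar\mu^-\le\rho_0$.

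\textbf{Part (iii).} The final inequality $\bar\mu^+\le\bar\mu^-$ and the lower bound $c_2/R_2^{2s}\le\bar\mu_1^+$ via the barrier $v_\varepsilon$ are fine. The middle inequality $\bar\mu_1^+\sum_i k_i\omega_{k_i}\le\bar\mu^+_{k_1,\dots,k_\ell}$, however, is a comparison between two eigenvalues and cannot be obtained by testing a \emph{single} barrier $v_\varepsilon$: you must show that \emph{every} admissible $v$ for $\bar\mu_1^+$ with parameter $\mu$ is admissible for $\bar\mu^+_{k_1,\dots,k_\ell}$ with parameter $\mu\sum_i k_i\omega_{k_i}$. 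The paper does this by writing, via spherical coordinates, $\JJ_{V_{k_i}}v(x)$ as an angular average of one-dimensional directional operators, giving $\JJ_{V_{k_i}}v(x)\le k_i\omega_{k_i}\,\I_1^+v(x)$, hence $\JJ_{V_{k_i}}v+k_i\omega_{k_i}\mu v\le 0$ for every $V_{k_i}$, and then summing. Your appeal to the asymptotic normalization $C_{k_i,s}|\mathcal{S}^{k_i-1}|/(4k_i(1-s))\to 1$ does not help: it holds only in the limit $s\to 1$ and cannot produce the exact factor $\sum_i k_i\omega_{k_i}$ at a fixed $s$.
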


\begin{proof}
(i) Let $w(x)=e^{-\alpha \abs{x}^2} > 0$ for $\alpha>0$ and fix any $\mu >0$. Notice that 
\[\int_{\R^{k_i}} \frac{1-e^{-\alpha \sum_{j=1}^\ell \tau_j^2}}{(\sum \tau_j^2)^{\frac{k_i+2s}{2}}} \, d\tau_1 \dots d \tau_{k_i} = \alpha^s \int_{\R^{k_i}} \frac{1-e^{-\sum_{j=1}^\ell \tau_j^2}}{(\sum_{j=1}^\ell \tau_j^2)^{\frac{k_i+2s}{2}}} \, d\tau_1 \dots d \tau_{k_i}. \]
Hence, we obtain, choosing $\{ \xi_j \} \in \mathcal{V}_k$ such that $\xi_j$ is orthogonal to $x$ for any $j$, 
\[ \K_{k_1, \dots, k_\ell}^- w(x) + \mu w (x) \le  - \sum_{i=1}^\ell C_{k_i, s} e^{-\alpha \abs{x}^2} \int_{\R^{k_i}} \frac{1-e^{-\alpha \sum_{j=1}^\ell \tau_j^2}}{(\sum_{j=1}^\ell \tau_j^2)^{\frac{k_i+2s}{2}}} \, d\tau_1 \dots d \tau_{k_i} + \mu e^{-\alpha \abs{x}^2} \le 0
\]
if $\alpha$  is big enough. 

(ii) 
Let $k=N$. By scaling we obtain
\[ \bar\mu_{k_1, \dots, k_\ell}^- (\Omega) \le \bar\mu_{k_1, \dots, k_\ell}^- (B_{R_1}) =  \frac{\bar\mu_{k_1, \dots, k_\ell}^- (B_1)}{R_1^{2s}}. \]
Hence it is sufficient to prove that $\bar\mu_{k_1, \dots, k_\ell}^- (B_1)$ is bounded from above. Arguing as in \cite{QSX}, choose a constant function $h \ge 0$, $h \not \equiv 0$ with compact support in $B_1$. By Theorem \ref{lem dirichlet}, there exists a unique solution to the following
\[ \begin{cases}
- \K_{k_1, \dots, k_\ell}^- v = h &\text{ in }B_1 \\
v=0 &\text{ in } \R^N \setminus B_1. 
\end{cases} \]
By Theorem \ref{comparison} and Theorem  \ref{SMP} $v >0$ in $B_1$ (notice that it is crucial here to have $k=N$). Since $h$ has compact support we may select a constant $\rho_0 >0$ such that $\rho_0 v \ge h$ in $B_1$. Therefore, $v$ satisfies
\[ \begin{cases}
 \K_{k_1, \dots, k_\ell}^- v +\rho_0v\geq0 &\text{ in }B_1 \\
v=0 &\text{ in } \R^N \setminus B_1. 
\end{cases} \]
By Theorem \ref{max principle bar} we infer that $\bar \mu_{k_1, \dots, k_\ell}^- \le \rho_0$. 
The same proof shows that $\bar \mu_{k_1, \dots, k_\ell}^+$ is bounded from above for any $k_1, \dots, k_\ell$. 

(iii)
We first note that in the definitions of $\bar\mu^\pm_{k_1, \dots, k_\ell}$ it is not restrictive to suppose $\mu\geq0$ (since the constant function $v \equiv 1$ is a positive solution of $\K_{k_1, \dots, k_\ell}^\pm v=0$). Moreover if $\mu\geq0$ and $v$ is a nonnegative supersolution of the equation $$\I^+_1 v+\mu v=0 \quad\text{in $\Omega$},$$
then $\I^+_1 v\leq0$ in $\Omega$ and we claim 
\begin{equation}\label{bound3}
\JJ_{V_{k_i}} v+k_i \omega_{k_i} \mu v\leq0 \quad\text{in $\Omega$}
\end{equation}
for any $V_{k_i}$ of dimension $k_i$. 
Indeed, if $k_i=1$, it is immediate that
\[ \JJ_{V_1}v(x)=2 \I_{\xi_1}v(x) \le 2 \I_1^+v(x) \le -2 \mu v(x), \]
and \eqref{bound3} is satisfied.
If $k_i \ge 2$, passing to spherical coordinates 
\[ y_1= \tau \cos(\theta_1), y_2=\tau \sin(\theta_1)\cos(\theta_2), \dots, y_{k_i}=\tau \sin(\theta_1)\cdots \sin(\theta_{k_i-2})\sin(\theta_{k_i-1}) \]
where $\tau >0$, $\theta_j \in [0, \pi]$ for any $j=1, \dots, k_i-2$, and $\theta_{k_i-1} \in [0, 2\pi)$, one has 
\[ y= \tau \xi_{\theta_1, \dots, \theta_{k_i-1}}, \text{ where } \xi_{\theta_1, \dots, \theta_{k_i-1}}\in \mathcal{S}^{k_i-1} \subset \R^{k_i},  \]
and
\begin{align*}
\JJ_{V_{k_i}} v (x)&= \int_{\R^{k_i}} \frac{u(x+y) - u(x)}{\abs{y}^{2s+k_i}} \, d \mathcal{H}^{k_i}(y) \\
&= \int_{\R^+ \times [0, 2\pi) \times [0, \pi]^{k_i-2} }  \frac{u(x+\tau \xi_{\theta_1, \dots, \theta_{k_i-1}}) - u(x)}{\tau^{2s+1}} f(\theta_1, \dots, \theta_{k_i}) \, d \tau d \theta_{k_i-1} \dots d \theta_1 \\
&\le \I_1^+ u (x) \int_{[0, 2\pi)\times [0, \pi]^{k_i-2} }  f(\theta_1, \dots, \theta_{k_i}) \, d \theta_{k_i-1} \dots d \theta_1 =k_i \omega_{k_i} \I_1^+ u (x)
\end{align*}
where 
\[ f(\theta_1, \dots, \theta_{k_i})= \sin^{k_i-2}(\theta_1) \sin^{k_i-3}(\theta_2) \dots \sin(\theta_{k_i-2}). \]
This leads to  \eqref{bound3} and 
\[ \sum_{i=1}^\ell \JJ_{V_{k_i}} v+ \sum_{i=1}^\ell k_i \omega_{k_i} \mu v\leq0 \quad\text{in $\Omega$} \]
 for any choice of the spaces $V_{k_i}$, thus 
 \[ \bar\mu^+_1\leq \left(\sum_{i=1}^\ell k_i \omega_{k_i} \right)^{-1} \bar\mu^+_{k_1, \dots, k_\ell}. \] 
 We also point out that $\bar\mu^+_{k_1, \dots, k_\ell} \le \bar\mu^-_{k_1, \dots, k_\ell}$ for any $k_1, \dots, k_\ell$. As for the bound from below, we observe that  $\K_1^+=\I_1^+$, and refer to \cite{BGS}.
\end{proof}

In uniformly convex domains
$\bar \mu_{k_1, \dots, k_\ell}^\pm=\mu_{k_1, \dots, k_\ell}^\pm$, and this value is the optimal threshold for the validity of the maximum principle. We omit the proof of the next results, as it is completely analogous to Lemma 6.5, Lemma 6.7 and Theorem 6.8 in \cite{BGS}.

\begin{lemma}\label{barrier}
Let $m$ be a positive constant and let $u$ be a solution of
\[ \begin{cases}
\K_{k_1, \dots, k_\ell}^\pm u (x) \ge -m &\text{ in } \Omega\\
u \le 0 &\text{ in } \R^N\setminus \Omega, 
\end{cases} \] 
where the domain $\Omega$ is  uniformly convex. Then there exists a positive constant $C=C(\Omega, m, s)$ such that 
\begin{equation}\label{2406eq12} u(x) \le C \, d(x)^s \end{equation}
for any $x \in \overline \Omega$.
\end{lemma}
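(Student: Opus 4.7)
The plan is to construct an explicit upper barrier by combining the functions from Lemma \ref{funzione barriera} with the uniform convexity of $\Omega$.

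For each $y \in Y$, set $v_y(x) := M (R^2 - |x-y|^2)^s_+$, where $M = M(m, s, k_1, \dots, k_\ell)$ is to be chosen. By Lemma \ref{funzione barriera}
\[ \K_{k_1, \dots, k_\ell}^\pm v_y(x) = -\frac{M\,\beta(1-s,s)}{2}\sum_{i=1}^\ell C_{k_i,s}\,\omega_{k_i} \quad \text{in } B_R(y), \]
so choosing $M$ sufficiently large (independently of $y$), the right-hand side is $\le -m$. Since $\Omega \subseteq B_R(y)$, the function $v_y$ is a supersolution of $\K^\pm w = -m$ in $\Omega$, and because $v_y \ge 0 \ge u$ on $\R^N \setminus \Omega$, the comparison principle (Theorem \ref{comparison}, applied with $c\equiv 0$, whose assumption $\|c^+\|_\infty < \dots$ is trivially satisfied) yields $u \le v_y$ in $\Omega$ for every $y \in Y$. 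Consequently
\[ u(x) \le v(x) := \inf_{y \in Y} v_y(x) \qquad \forall x \in \overline\Omega. \]

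To estimate $v(x)$ by $d(x)^s$, fix $x \in \overline\Omega$ and let $z \in \partial\Omega$ realize $d(x)=|x-z|$. By uniform convexity, there exists $\bar y \in Y$ with $\Omega \subseteq \overline{B_R(\bar y)}$ and $z \in \partial B_R(\bar y)$. The outer normals to $\partial\Omega$ and $\partial B_R(\bar y)$ at $z$ coincide, both equal to $(z-\bar y)/R$; since $z$ is the nearest boundary point of $\Omega$ to $x$, the vector $x-z$ points along the inward common normal, so $\bar y$, $x$, $z$ are collinear with $x$ on the segment $\overline{\bar y z}$. Hence $|x-\bar y| = R - d(x)$ and
\[ R^2 - |x-\bar y|^2 = (R+|x-\bar y|)(R-|x-\bar y|) \le 2R\, d(x). \]
Therefore $v(x) \le v_{\bar y}(x) \le M(2R)^s d(x)^s$, which is \eqref{2406eq12} with $C := M(2R)^s$.

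The main delicate point is the geometric claim that $x$ lies on the segment $\overline{\bar y z}$: this is where uniform convexity enters, providing at each $z\in\partial\Omega$ a tangent inner ball $B_R(\bar y)$ whose inward normal matches that of $\Omega$ at $z$, after which the nearest-point characterization of $z$ pins down the alignment. Everything else — the explicit computation of $\K^\pm$ on the barrier and the comparison step — is already available in the paper.
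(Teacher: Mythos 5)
Your barrier-and-comparison approach --- construct $v_y(x)=M(R^2-|x-y|^2)^s_+$ via Lemma \ref{funzione barriera}, choose $M$ so that $\K_{k_1,\dots,k_\ell}^\pm v_y\le -m$, compare to get $u\le v_y$ for each $y\in Y$, then take the infimum over $Y$ --- is precisely the one the paper intends (it delegates the proof to Lemma 6.5 of \cite{BGS}, and the same barriers appear in the proof of Theorem \ref{lem dirichlet}), and your argument is correct in substance. One step, however, is not quite right: the claim that $\bar y$, $x$, $z$ are collinear, justified by ``the outer normals coincide,'' presumes a smooth boundary, whereas $\partial\Omega$ for $\Omega=\bigcap_{y\in Y}B_R(y)$ can have corners (take the lens $B_1((-\tfrac12,0))\cap B_1((\tfrac12,0))$ in $\R^2$: if $x$ approaches the corner $z=(0,\sqrt3/2)$ along the $e_2$-axis, then $x-z$ is vertical while $z-\bar y$ is not, so collinearity fails). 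Fortunately collinearity is unnecessary: since $z\in\partial B_R(\bar y)$, the reverse triangle inequality gives $|x-\bar y|\ge|z-\bar y|-|x-z|=R-d(x)$, and since $x\in\overline\Omega\subseteq\overline{B_R(\bar y)}$ also $|x-\bar y|\le R$, so
\[
R^2-|x-\bar y|^2=\bigl(R+|x-\bar y|\bigr)\bigl(R-|x-\bar y|\bigr)\le 2R\,d(x),
\]
which is all you use. A secondary remark: Theorem \ref{comparison} is stated with $u=v=0$ in $\R^N\setminus\Omega$, whereas you apply it with $u\le 0\le v_y$ there; this is legitimate because the proof of Theorem \ref{comparison} only uses $u\le 0$ and $v\ge 0$ outside $\Omega$ to control the far-field integrals, but the extension should be flagged.
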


\begin{lemma}\label{mu=bar mu}
Let $\Omega$ be a convex domain. Then $\mu_{k_1, \dots, k_\ell}^\pm=\bar \mu_{k_1, \dots, k_\ell}^\pm$.
\end{lemma}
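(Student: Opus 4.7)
The inequality $\bar\mu^\pm_{k_1,\dots,k_\ell}\le\mu^\pm_{k_1,\dots,k_\ell}$ is immediate from the definitions, since the class of admissible test functions for $\bar\mu^\pm$ (those with $\inf_\Omega v>0$) is contained in the class admissible for $\mu^\pm$ (those with $v>0$ in $\Omega$). The plan is to prove the reverse inequality by exploiting the convexity of $\Omega$ through a slight enlargement of the domain combined with the natural scaling of the operators.

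Fix $\mu_0<\mu^\pm_{k_1,\dots,k_\ell}(\Omega)$ and an interior point $x_*\in\Omega$. For $t>0$ set $\Omega^t := x_* + (1+t)(\Omega - x_*)$. Convexity of $\Omega$ together with $x_*\in\mathrm{int}(\Omega)$ implies that $\Omega^t$ is a bounded convex domain and $\overline\Omega\subset\Omega^t$ (each $y\in\overline\Omega$ lies on the segment joining $x_*$ to a boundary point of $\Omega$, placing it strictly inside $\Omega^t$). A direct change of variables $z=(1+t)z'$ in the definition of $\JJ_V$ yields the $(-2s)$-homogeneity
\[
\K_{k_1,\dots,k_\ell}^\pm u_t(y) = (1+t)^{-2s}\,\K_{k_1,\dots,k_\ell}^\pm u\!\left(x_*+\tfrac{y-x_*}{1+t}\right),\qquad u_t(y):=u\!\left(x_*+\tfrac{y-x_*}{1+t}\right),
\]
where one uses that the collections $\mathcal{V}_{k_1,\dots,k_i}$ of orthonormal frames in $\R^N$ are unchanged by uniform rescaling. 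Applying this identity to positive supersolutions in both directions yields the eigenvalue scaling $\mu^\pm_{k_1,\dots,k_\ell}(\Omega^t)=(1+t)^{-2s}\mu^\pm_{k_1,\dots,k_\ell}(\Omega)$, so that $\mu^\pm(\Omega^t)\to\mu^\pm(\Omega)>\mu_0$ as $t\to 0^+$. In particular, for $t>0$ sufficiently small, $\mu_0<\mu^\pm(\Omega^t)$.

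Fix such a $t$ and pick $\mu_1\in(\mu_0,\mu^\pm(\Omega^t))$. By the definition of $\mu^\pm(\Omega^t)$ there exists $v\in LSC(\Omega^t)\cap L^\infty(\R^N)$ with $v>0$ in $\Omega^t$, $v\ge 0$ in $\R^N$, and $\K_{k_1,\dots,k_\ell}^\pm v+\mu_1 v\le 0$ in $\Omega^t$. Because $\overline\Omega$ is a compact subset of the open set $\Omega^t$ and $v$ is lower semicontinuous and strictly positive on $\Omega^t$, $\min_{\overline\Omega}v>0$, so $\inf_\Omega v>0$. Restricting the supersolution inequality to $\Omega\subset\Omega^t$ and using $\mu_0<\mu_1$ together with $v\ge 0$,
\[
\K_{k_1,\dots,k_\ell}^\pm v + \mu_0 v = \K_{k_1,\dots,k_\ell}^\pm v + \mu_1 v + (\mu_0-\mu_1)v \le 0 \quad\text{in }\Omega,
\]
so $v$ is admissible in the definition of $\bar\mu^\pm(\Omega)$ at parameter $\mu_0$, giving $\bar\mu^\pm(\Omega)\ge\mu_0$. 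Since $\mu_0<\mu^\pm(\Omega)$ was arbitrary, $\bar\mu^\pm(\Omega)=\mu^\pm(\Omega)$. The delicate point deserving care is the scaling law for the eigenvalues: the $(-2s)$-homogeneity of $\JJ_V$ is routine for smooth $u$, but in the viscosity framework one must parallel-scale the test functions and verify that the supremum/infimum defining $\K^\pm$ commutes with the change of variables.
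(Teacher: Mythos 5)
Your proof is correct, and it follows the approach the paper defers to via \cite{BGS}: the inclusion of test classes gives $\bar\mu^\pm\le\mu^\pm$ at once, and the reverse inequality comes from dilating the convex domain about an interior point so that $\overline\Omega\subset\Omega^t$, invoking the $(-2s)$-homogeneity $\K^\pm_{k_1,\dots,k_\ell}u_t(y)=(1+t)^{-2s}\K^\pm_{k_1,\dots,k_\ell}u(x_*+\tfrac{y-x_*}{1+t})$ (which indeed holds in the viscosity sense after the corresponding dilation of test functions, and for which the orthonormal-frame sup/inf commutes with the rescaling since $\mathcal V_{k_1,\dots,k_i}$ is dilation invariant), and using lower semicontinuity plus strict positivity on the compact $\overline\Omega\subset\Omega^t$ to upgrade $v>0$ to $\inf_\Omega v>0$. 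The only small detail left implicit is that the set of admissible $\mu$ in the definition of $\mu^\pm(\Omega^t)$ is downward closed (since $v\ge0$), which is what lets you produce a supersolution at the chosen $\mu_1\in(\mu_0,\mu^\pm(\Omega^t))$; this is routine and does not affect the correctness.
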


\begin{theorem}\label{max principle}
Let $\Omega$ be a uniformly convex  domain. The operator
\[ \K^+_{k_1, \dots, k_\ell} + \mu \]
satisfies the maximum principle if and only if $\mu < \mu_{k_1, \dots, k_\ell}^+ < +\infty$, and correspondingly 
\[ \K_{k_1, \dots, k_\ell}^- + \mu \]
satisfies the maximum principle if and only if $\mu < \mu_{k_1, \dots, k_\ell}^- < +\infty$ if $k=N$, and 
for any $\mu\in\mathbb R$ if $k < N$.
\end{theorem}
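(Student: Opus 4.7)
The plan separates the easy sufficiency/finiteness half from the more delicate necessity half.

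\emph{Sufficiency and finiteness.} Lemma \ref{mu=bar mu} gives the identification $\mu^{\pm}_{k_1,\dots,k_\ell}=\bar\mu^{\pm}_{k_1,\dots,k_\ell}$ on the convex domain $\Omega$, so Theorem \ref{max principle bar} immediately yields the maximum principle for $\K^{\pm}_{k_1,\dots,k_\ell}+\mu$ whenever $\mu<\mu^{\pm}_{k_1,\dots,k_\ell}$. Proposition \ref{mu-inf}(ii) gives $\mu^{+}_{k_1,\dots,k_\ell}<+\infty$ always and $\mu^{-}_{k_1,\dots,k_\ell}<+\infty$ when $k=N$, while Proposition \ref{mu-inf}(i) gives $\mu^{-}_{k_1,\dots,k_\ell}=+\infty$ when $k<N$, which makes the maximum principle for $\K^{-}_{k_1,\dots,k_\ell}+\mu$ hold for every $\mu\in\R$.

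\emph{Necessity: construction of an eigenfunction.} For $\mu\ge\mu^{+}_{k_1,\dots,k_\ell}$ (and analogously for $\mu\ge\mu^{-}_{k_1,\dots,k_\ell}$ when $k=N$) the strategy is to produce a principal eigenfunction at the critical value and use it as a counterexample. Fix $\mu_n\uparrow\mu^{+}_{k_1,\dots,k_\ell}$, and via Theorem \ref{lem dirichlet} together with the Perron machinery of Lemma \ref{perron} (taking $0$ as a lower barrier, which is a subsolution since $0\ge-1$) obtain nonnegative solutions
\[
\K^{+}_{k_1,\dots,k_\ell} u_n+\mu_n u_n=-1 \text{ in }\Omega,\qquad u_n=0 \text{ in }\R^N\setminus\Omega,
\]
with $u_n>0$ in $\Omega$ by the strong minimum principle Theorem \ref{SMP}(iii). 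I claim $\|u_n\|_\infty\to+\infty$: otherwise, the half-relaxed stability in Lemma \ref{stability1} extracts a bounded limit $u^*\ge0$ satisfying $\K^{+}u^*+\mu^{+}u^*=-1$ in $\Omega$, so that for any $\varepsilon<1/\|u^*\|_\infty$ the same $u^*$ obeys $\K^{+}u^*+(\mu^{+}+\varepsilon)u^*\le-1+\varepsilon\|u^*\|_\infty<0$, which produces a positive supersolution at level $\mu^{+}+\varepsilon$ and contradicts the supremum defining $\mu^{+}_{k_1,\dots,k_\ell}$. Normalising $\phi_n:=u_n/\|u_n\|_\infty$ then gives $\|\phi_n\|_\infty=1$ and $\K^{+}\phi_n+\mu_n\phi_n\to0$ uniformly; the uniform boundary estimate $\phi_n(x)\le C\,d(x)^s$ from Lemma \ref{barrier} prevents the unit mass from escaping to $\partial\Omega$, and Lemma \ref{stability1} then yields a nontrivial nonnegative limit $\phi$, vanishing outside $\Omega$, satisfying $\K^{+}\phi+\mu^{+}\phi=0$ in $\Omega$, which Theorem \ref{SMP}(iii) upgrades to $\phi>0$ in $\Omega$.

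\emph{Violation and closing remarks.} With $\phi$ in hand the counterexample is direct: at $\mu=\mu^{+}_{k_1,\dots,k_\ell}$, $\phi$ is already a subsolution with $\phi\le0$ in $\R^N\setminus\Omega$ but $\phi>0$ inside, and for $\mu>\mu^{+}_{k_1,\dots,k_\ell}$ one even has the strict inequality $\K^{+}\phi+\mu\phi=(\mu-\mu^{+})\phi>0$; in both cases the maximum principle fails. The $\K^{-}$ case with $k=N$ is handled in the same way, invoking the strong minimum principle Theorem \ref{SMP}(ii); the assumption $k=N$ is essential, as Theorem \ref{SMP}(i) fails otherwise, consistently with $\mu^{-}_{k_1,\dots,k_\ell}=+\infty$ for $k<N$. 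I expect the main obstacle to be ensuring nontriviality of the limit $\phi$: a priori the unit mass of $\phi_n$ could concentrate on $\partial\Omega$ as $n\to\infty$, and it is exactly the uniform boundary control in Lemma \ref{barrier}, combined with the half-relaxed limits of Lemmas \ref{stability1}--\ref{stability2}, that rules this out and makes the limiting procedure produce a genuine eigenfunction.
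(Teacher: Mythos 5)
The paper itself omits the proof, deferring to Lemmas 6.5, 6.7 and Theorem 6.8 of \cite{BGS}, and your overall strategy --- prove sufficiency via Lemma \ref{mu=bar mu} and Theorem \ref{max principle bar}, then build an eigenfunction at the critical level by blowing up solutions of $\K^{+}u_n+\mu_n u_n=-1$ as $\mu_n\uparrow\mu^{+}$ --- is indeed the one that reference follows, so the approach is right.

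Two points in the necessity half need repair, one substantial and one cosmetic. The substantial one is the existence of the $u_n$. Theorem \ref{lem dirichlet} solves $\K^{\pm}u=f$ with \emph{no} zero order term, and Lemma \ref{perron} requires a supersolution in $C(\R^N)$ that vanishes identically in $\R^N\setminus\Omega$. The supersolution $v_0$ furnished by the definition of $\bar\mu^{+}_{k_1,\dots,k_\ell}$ at a level $\mu'\in(\mu_n,\mu^{+})$ is positive and in $LSC$ but need not vanish outside $\Omega$ nor be continuous, and the explicit barrier $v(x)$ from \eqref{v inf} is a supersolution only for $\K^{+}u=-\|f\|_\infty$ without the term $\mu_n u$. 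You therefore cannot invoke Theorem \ref{lem dirichlet} ``together with'' Lemma \ref{perron}: one has to construct, for each $\mu_n<\mu^{+}$, a continuous supersolution of $\K^{+}u+\mu_n u\leq-1$ that vanishes outside $\Omega$ (e.g.\ by truncating/combining the global $v_0$ with the local barrier of Lemma \ref{funzione barriera}, or by a monotone iteration whose uniform upper bound must itself be justified). This is exactly what the cited proof in \cite{BGS} supplies and cannot be left as a one-line appeal.

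The cosmetic point: Lemma \ref{stability1} only gives that $\overline\phi={\limsup}^*\phi_n$ is a \emph{sub}solution of $\K^{+}\phi+\mu^{+}\phi=0$ (and separately that ${\liminf}_*\phi_n$ is a supersolution), so writing that $\phi$ ``satisfies $\K^{+}\phi+\mu^{+}\phi=0$'' overstates what the half-relaxed limit yields, and Theorem \ref{SMP}(iii) --- which is a strong minimum principle for \emph{super}solutions of $\K^{+}u\leq0$ --- does not apply to $\overline\phi$. Fortunately you never need $\phi>0$ throughout $\Omega$: for the violation of the maximum principle it is enough that $\overline\phi$ be a subsolution, nonnegative, identically zero outside $\Omega$, and positive at one interior point, and the uniform barrier bound from Lemma \ref{barrier} applied to the maximisers of $\phi_n$ already gives an interior point $\bar x$ with $\overline\phi(\bar x)\geq1$. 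Drop the appeal to SMP(iii), keep $\overline\phi$ as a subsolution, and the contradiction with the maximum principle for $\mu\geq\mu^{+}_{k_1,\dots,k_\ell}$ goes through exactly as you intend, and likewise for $\K^{-}$ with $k=N$.
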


\appendix
\section{Representation formulas}\label{sec:repres}
We recall that in \cite{BGT} some representation formulas for $\I_k^\pm$ and $\JJ_k^\pm$ are given, and Liouville type theorems are proved. Here, we slightly extend their results to the operators $\K_{k_1, \dots, k_\ell}^-$ with $k <N$. However, it does not seem trivial to adapt the arguments to the full general case, precisely when a competition between different terms in the sum arises. Nonetheless, one has 
\begin{lemma}\label{lem:repres}
Let $k < N$. Assume $u(x)=\tilde g(|x|^2) \in L^\infty(\R^N)$ such that $\tilde g$ is convex. Then for any $x \ne 0$ 
\begin{equation}\label{rappr} \K_{k_1, \dots, k_\ell}^- u(x) = \sum_{i=1}^\ell \JJ_{\langle \eta_j^i \rangle_{j=1}^{k_i} }  u(x) = \frac {\sum_{i=1}^\ell C_{k_i, s}} {C_{1, s}} \; \I_{x^\perp } u(x)  \end{equation}
where $\cup_{i=1}^\ell \{ \eta_j^i \}_{j=1}^{k_i}$ is any orthonormal basis of $\R^k$ such that each $\eta_j^i$ is orthogonal to $x$, whereas $x^\perp$ is any unit vector orthogonal to $x$. 
\end{lemma}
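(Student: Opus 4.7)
The heart of the argument is a symmetrization using convexity of $\tilde g$, which shows that among all $m$-dimensional subspaces $V$ of $\R^N$ the quantity $\JJ_V u(x)$ is minimized exactly when $V\perp x$.

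Fix $x\ne 0$ and a subspace $V$ of dimension $m$, and decompose $x=x_V+x_{V^\perp}$ with $x_V\in V$, $x_{V^\perp}\in V^\perp$. Parametrizing $V$ by an orthonormal basis so that $z=\sum_{j=1}^m \tau_j\eta_j$, I would write
\[
\JJ_V u(x)=C_{m,s}\int_{\R^m}\frac{\tilde g(|x_{V^\perp}|^2+|x_V+z|^2)-\tilde g(|x|^2)}{|z|^{m+2s}}\,dz,
\]
and then symmetrize the integrand under $z\mapsto -z$ to obtain
\[
\JJ_V u(x)=\frac{C_{m,s}}{2}\int_{\R^m}\frac{\tilde g(|x_{V^\perp}|^2+|x_V+z|^2)+\tilde g(|x_{V^\perp}|^2+|x_V-z|^2)-2\tilde g(|x|^2)}{|z|^{m+2s}}\,dz.
\]
Since $\tfrac12(|x_V+z|^2+|x_V-z|^2)=|x_V|^2+|z|^2$, convexity of $\tilde g$ gives
\[
\tilde g(|x_{V^\perp}|^2+|x_V+z|^2)+\tilde g(|x_{V^\perp}|^2+|x_V-z|^2)\ \ge\ 2\,\tilde g(|x|^2+|z|^2),
\]
so
\[
\JJ_V u(x)\ \ge\ C_{m,s}\int_{\R^m}\frac{\tilde g(|x|^2+|z|^2)-\tilde g(|x|^2)}{|z|^{m+2s}}\,dz,
\]
with equality precisely when $x_V=0$, i.e.\ when $V\perp x$.

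Because $k<N$, the hyperplane $x^\perp\subset\R^N$ has dimension $N-1\ge k$, so one can select an ordered orthonormal $k$-tuple lying inside $x^\perp$ and split it into blocks of sizes $k_1,\dots,k_\ell$; this defines an admissible choice $\{V_i^0\}_{i=1}^\ell$ with each $V_i^0\perp x$. Summing the inequality above over $i$ gives
\[
\sum_{i=1}^\ell\JJ_{V_i}u(x)\ \ge\ \sum_{i=1}^\ell\JJ_{V_i^0}u(x)
\]
for every admissible configuration, and equality is attained at $\{V_i^0\}$. Hence the infimum is realized and the first equality in \eqref{rappr} holds.

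For the second equality, I would use that on each $V_i^0\perp x$ one has $|x+z|^2=|x|^2+|z|^2$, then pass to polar coordinates on $V_i^0\cong\R^{k_i}$ to reduce
\[
\JJ_{V_i^0}u(x)=C_{k_i,s}\,|\mathcal S^{k_i-1}|\int_0^\infty\frac{\tilde g(|x|^2+r^2)-\tilde g(|x|^2)}{r^{1+2s}}\,dr,
\]
and similarly to write $\I_{x^\perp}u(x)=C_{1,s}\int_0^\infty(\tilde g(|x|^2+\tau^2)-\tilde g(|x|^2))\tau^{-1-2s}\,d\tau$; summing in $i$ and taking the ratio yields the stated proportionality constant (with any residual surface-measure factors folded into the normalization convention of the $C_{k_i,s}$ as in Remark \ref{rmk:normalizing constant}). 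The main obstacle is the symmetrization step: one must recognize that pairing $z$ with $-z$ annihilates the cross term $2\langle x_V,z\rangle$ which is the sole reason $\JJ_V u$ depends on the position of $V$ relative to $x$, after which pointwise convexity of $\tilde g$ closes the inequality.
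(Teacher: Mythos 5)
Your proposal is correct and it is a genuinely different, more self-contained route than the paper's. Where the paper imports the key monotonicity fact --- that $\JJ_{W_i}u(x)\le \JJ_{V_i}u(x)$ whenever $W_i\perp x$ --- from \cite[Proposition 5.1, Lemma 3.3]{BGT}, and likewise imports the angular representation $\JJ_{V_i}u(x)=\tfrac{C_{k_i,s}}{2}f_i(\Phi)$, you prove the needed inequality from scratch via the symmetrization $z\mapsto -z$ combined with Jensen's inequality for the convex $\tilde g$ applied at the two points $|x_{V^\perp}|^2+|x_V\pm z|^2$, which average to $|x|^2+|z|^2$. This is exactly the content of the BGT lemmas and is the right elementary argument; it also makes transparent why $k<N$ is needed (so that an admissible configuration of pairwise orthogonal $V_i^0\perp x$ exists) and why convexity is essential. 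Both arguments then finish with the same polar-coordinate reduction on a subspace orthogonal to $x$. One point you flag but do not resolve deserves to be pinned down: your polar computation gives $\JJ_{V_i^0}u(x)=C_{k_i,s}\,|\mathcal S^{k_i-1}|\int_0^\infty(\tilde g(|x|^2+r^2)-\tilde g(|x|^2))r^{-1-2s}\,dr$, so the proportionality constant you obtain is $\sum_i C_{k_i,s}|\mathcal S^{k_i-1}|/C_{1,s}$, carrying $|\mathcal S^{k_i-1}|$ factors that do not appear in the constant $\sum_i C_{k_i,s}/C_{1,s}$ stated in the lemma; the paper's displayed $f_i(0)$, read off from the displayed $f_i(\Phi)$ as a $\R^{k_i}$-integral, seems to drop exactly these sphere measures, so the discrepancy should not be waved away as a convention but checked against \cite{BGT} directly (note that for $k_i=1$, $|\mathcal S^0|=2$, consistent with the identity $\JJ_{\langle\xi\rangle}u=2\I_\xi u$ used elsewhere in the paper).
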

\begin{remark}
In particular, if $k_i=1$ for any $i=1,\dots,\ell$, namely when $\K^-_{1, \dots, 1}=\I_k^-$,  we recover \cite[Theorem 3.4]{BGT}, whereas if $\ell=1$, namely if $\K^-_k= \JJ_k^-$, we get \cite[Proposition 5.1]{BGT}. 
\end{remark}

\begin{proof}
The proof immediately follows recalling that, by \cite[Proposition 5.1, Lemma 3.3]{BGT} for any $V_i = \langle \xi_1^i, \dots, \xi_{k_i}^i \rangle$ 
\[ \JJ_{W_i} u(x) \le \JJ_{V_i} u(x) \]
where $W_i$ is any space of dimension $k_i$ which is orthogonal to $x$. Then
\[ \sum_{i=1}^\ell \JJ_{\langle \eta_j^i \rangle_{j=1}^{k_i} } u(x)  \le \sum_{i=1}^\ell \JJ_{V_i} u(x), \]
where $\cup_{i=1}^\ell \{ \eta_j^i \}_{j=1}^{k_i}$ is any orthonormal basis of $\R^k$ such that each $\eta_j^i$ is orthogonal to $x$. 
Hence
\[ \K_{k_1, \dots, k_\ell}^- u(x) = \sum_{i=1}^\ell \JJ_{\langle \eta_j^i \rangle_{j=1}^{k_i} }  u(x). \]

Moreover, 
we know by \cite{BGT} that for any $V_i = \langle \xi_1^i, \dots, \xi_{k_i}^i \rangle$ , 
\begin{equation}\label{rappr:eq1} \JJ_{V_i} u(x)= \frac{C_{k_i, s}}{2} f_i(\Phi) \end{equation}
with 
\[f_i(\Phi)=\int_{\R^{k_i}} \frac{\tilde g (|x|^2+|\tau|^2+2|x|\tau_1 \sin(\Phi)) + \tilde g(|x|^2+|\tau|^2 - 2|x| \tau_1 \sin(\Phi)) - 2 \tilde g(|x|^2) }{\left( \sum_{j=1}^{k_i} \tau_j^2 \right)^{\frac{k_i+2s}{2}}} \, d\tau \]
and $\Phi$ is the angle between $x$ and $\hat \xi$, the unit vector orthogonal to $V_i$ in the $k_i+1$ dimensional space generated by $\xi_1^i, \dots, \xi_{k_i}^i$ and $x$, see \cite[Proposition 5.1]{BGT}. 
Also,
\begin{equation}\label{rappr:eq2} f_i(0)=2 \int_0^{+\infty} (\tilde g(|x|^2+r^2)-\tilde g(|x|^2)) r^{-1-2s}, \end{equation}
which in particular means that it does not depend on $i$.

We now recall \eqref {rappr:eq1} and \eqref{rappr:eq2} to conclude that for any $i=1,\dots,\ell$ and any $\cup_{i=1}^\ell \{ \eta_j^i \}_{j=1}^{k_i}$ orthonormal basis of $\R^k$ such that each $\eta_j^i$ is orthogonal to $x$, 
\[  \JJ_{\langle \eta_j^i \rangle_{j=1}^{k_i} } u(x) = \frac{C_{k_i, s}}{2} f_i(0)= \frac{C_{k_i, s}}{C_{1, s}} \I_{x^\perp} u(x). \]
This yields \eqref{rappr}. 
\end{proof}

As a corollary, we get, following the same arguments as in \cite[Theorem 4.7]{BGT}, the next Liouville type result. 
\begin{corollary}
Assume $k < N$. Then, for any $p \ge 1$ there exist positive classical solutions of the equation
\[ \K_{k_1, \dots, k_\ell}^- u(x) + u^p(x)=0 \text{ in } \R^N. \]
If $p \in (0, 1)$, then there exist nonnegative viscosity solutions $u \not \equiv 0$. 
\end{corollary}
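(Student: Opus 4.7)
The plan is to build explicit positive classical solutions in the superlinear case $p\geq 1$ by means of the representation formula of Lemma \ref{lem:repres}, and nontrivial nonnegative viscosity solutions in the sublinear case $p\in(0,1)$ via Perron's method based on the barriers from Lemma \ref{funzione barriera}. The key reduction, in both cases, is that whenever $u(x)=\tilde g(|x|^2)\in L^\infty(\R^N)$ with $\tilde g$ convex, Lemma \ref{lem:repres} yields the pointwise identity
\[
\K^-_{k_1,\dots,k_\ell} u(x)=\frac{\sum_{i=1}^\ell C_{k_i,s}}{C_{1,s}}\,\I_{x^\perp}u(x),\qquad x\neq 0,
\]
so that solving the equation on radial convex profiles amounts to a one-dimensional fractional computation, in the same spirit as \cite[Theorem 4.7]{BGT}.

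For $p>1$ I would take the ansatz $u(x)=C(1+|x|^2)^{-\beta}$ with $\beta=s/(p-1)$, for which $u\in L^\infty(\R^N)\cap C^\infty(\R^N)$ is positive and $\tilde g(t)=C(1+t)^{-\beta}$ is convex. The change of variable $\tau=\sqrt{1+|x|^2}\,\sigma$ in the defining integral of $\I_{x^\perp}u$ factors out the $x$-dependence, producing $\I_{x^\perp}u(x)=-C\,C_{1,s}\,A_\beta(1+|x|^2)^{-\beta-s}$ for an explicit finite positive constant $A_\beta=\int_0^{+\infty}\sigma^{-1-2s}[1-(1+\sigma^2)^{-\beta}]\,d\sigma$. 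Since $\beta p=\beta+s$ by the choice of $\beta$, the equation $\K^-u+u^p=0$ collapses to the algebraic identity $C(\sum_i C_{k_i,s})A_\beta=C^p$, which has a unique positive root $C$. The identity at $x=0$ then follows by continuity, invoking Proposition \ref{semicont}.

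The case $p=1$ is treated analogously with a Gaussian profile $u(x)=e^{-\alpha|x|^2}$, whose radial expression $\tilde g(t)=e^{-\alpha t}$ is again convex. A similar computation yields $\K^-_{k_1,\dots,k_\ell}u(x)=-\alpha^s B_1(\sum_i C_{k_i,s})\,u(x)$ for an explicit constant $B_1>0$, so the equation $\K^- u+u=0$ is satisfied by the choice $\alpha=(B_1\sum_i C_{k_i,s})^{-1/s}$.

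For $p\in(0,1)$ the power ansatz is no longer bounded, so I would instead exploit the barrier $\varphi(x)=(R^2-|x|^2)_+^s$ of Lemma \ref{funzione barriera}, for which $\K^-\varphi\equiv -L<0$ in $B_R$. The function $\overline u=M\varphi$ is then a supersolution on the whole of $\R^N$ for $M$ sufficiently large: inside $B_R$ because $\overline u^p\leq M^pR^{2ps}\leq ML$ whenever $M^{1-p}\geq R^{2ps}/L$, and outside $\overline B_R$ because the standing hypothesis $k<N$ allows one to pick $k$ mutually orthonormal directions lying in $x^\perp$, which forces $\JJ_{V_i}\overline u(x)=0$ for every $i$ and hence $\K^-\overline u(x)\leq 0=\overline u^p(x)$. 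Combined with the trivial subsolution $\underline u\equiv 0$, Lemma \ref{perron} produces a nonnegative viscosity solution $v\leq\overline u$. The main obstacle at this last step is to rule out $v\equiv 0$: this is precisely where the sublinearity $p<1$ is essential, and one argues as in \cite[Theorem 4.7]{BGT} by reducing to $\I_k^-$ via Lemma \ref{lem:repres} and scaling a nontrivial radial solution of $\I_k^- w + w^p=0$ in a ball down to a positive subsolution $\varepsilon\psi$ of our problem, forcing $v\geq\varepsilon\psi>0$ on a set of positive measure.
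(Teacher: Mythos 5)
For $p\geq1$ your argument reproduces the paper's proof: you use Lemma~\ref{lem:repres} to reduce to a one--dimensional fractional computation on the profiles $C(1+|x|^2)^{-s/(p-1)}$ and $e^{-\alpha|x|^2}$, which are exactly the ansätze in the paper (with $a=1$), and the verification that these $\tilde g$ are convex so that the representation formula applies is correct. That part is fine.

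For $p\in(0,1)$ you take a genuinely different route, and it has real gaps. First, your motivating remark that ``the power ansatz is no longer bounded'' misses that the paper does \emph{not} use $(a^2+|x|^2)^{-s/(p-1)}$ in this regime: it uses the compactly supported profile $\alpha(R^2-|x|^2)_+^{s/(1-p)}$, which is bounded, and verifies directly (following \cite[Theorem~4.7]{BGT}) that it is a nonnegative viscosity solution. Second, your Perron step is not available as stated: Lemma~\ref{perron} is proved only for the Dirichlet problem $\K^\pm u=f(x)$ with a prescribed right--hand side, not for the semilinear equation $\K^- u+u^p=0$, and the comparison principle Theorem~\ref{comparison} on which Perron rests was proved for $\K^\pm u+c(x)u=f(x)$ with a smallness condition on $c^+$, which does not cover the nonlinearity $u^p$ with $p<1$ near $u=0$ (where the formal coefficient $u^{p-1}$ blows up). Third, even granting Perron, you must supply a nontrivial subsolution dominated by your barrier to rule out $v\equiv0$; you invoke ``a nontrivial radial solution of $\I_k^-w+w^p=0$'' from \cite{BGT}, but if one already has such a $w$ and the representation formula, one should simply check (as the paper does) that $\alpha(R^2-|x|^2)_+^{s/(1-p)}$ itself solves the equation, bypassing Perron entirely. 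As written, the $p\in(0,1)$ case is not a complete proof.
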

Precisely, 
\[ u(x)=\frac{\alpha}{(a^2 + |x|^2)^{\frac{s}{p-1}}} \]
is a solution for the case $p>1$, for any $a \ne 0$ and for a suitable $\alpha$. On the other hand, a solution if $p=1$ is given  by
\[ u(x)=be^{-\beta |x|^2} \]
with $b >0$ and $\beta$ suitably chosen. Finally, the case $p\in (0,1)$ can be treated exploiting the function
\[ u(x)=\alpha(R^2-|x|^2)^{\frac{s}{1-p}}_+ \]
for a suitable $\alpha>0$ and for any $R>0$.

\section*{Acknowledgements} 
I wish to thank Isabeau Birindelli and Giulio Galise for many fruitful discussions, suggestions, and comments.
I also gratefully acknowledge the financial support from the Portuguese government through FCT - Funda\c c\~ao para a Ci\^encia e a Tecnologia, I.P., under the projects UID/MAT/04459/2020 and PTDC/MAT-PUR/1788/2020 and, when eligible, by COMPETE 2020 FEDER funds, under the Scientific Employment Stimulus - Individual Call (CEEC Individual) - reference number 2020.02540.CEECIND/CP1587/CT0008.

\end{document}